\newcommand{\sgn}{\mathrm{sgn}}
\newtheorem {theorem} {Theorem}%[section]
\newtheorem {lemma}{Lemma}
\newtheorem {remark}{Remark}
\begin{document}
\renewcommand{\arraystretch}{1.5}
	
\title[On the existence of closed trajectories and pseudo-trajectories]{On the existence of closed trajectories and pseudo-trajectories for a family of third order differential equations}
\author[M. D. A. Caldas, R. M. Martins]
{Mayara D. A. Caldas$^{1}$, Ricardo M. Martins$^{1}$}
	
\address{$^1$ Departamento de Matem\'{a}tica, Universidade
Estadual de Campinas, Rua S\'{e}rgio Buarque de Holanda, 651, Cidade Universit\'{a}ria Zeferino Vaz, 13083--859, Campinas, SP, Brazil.} \email{mcaldas@ime.unicamp.br, rmiranda@unicamp.br}
	
\subjclass[2010]{34A36,37C29,37H20,34C28}
	
\keywords{piecewise smooth differential equations, averaging theory, closed trajectory}
	
\maketitle

{\bf Abstract.}  The goal of this article is to study the existence of closed trajectories for the differential equation $\dddot{z}+a\ddot{z}+b\dot{z}+abz=\varepsilon F(z,\dot{z},\ddot{z})$ in two situations. In the first situation, we consider $F(z,\dot{z},\ddot{z})=1$ and $b=\sgn(h(z,\dot{z},\ddot{z}))$, where $h(z,\dot{z},\ddot{z})=z^2+(\dot{z})^2+(\ddot{z})^2-1$. We show that the differential equation is equivalent to a piecewise smooth differential system that admits the unit sphere as the discontinuity manifold. We obtain conditions for the existence of a closed pseudo-trajectory in this case. In the second situation, we consider $\varepsilon \neq 0$ sufficiently small, $b>0$, and $F(z,\dot{z},\ddot{z})$ a $n$-degree polynomial. We show that the unperturbed differential equation has a family of isochronous periodic solutions filling an invariant plane. Then, we study the maximum number of limit cycles which bifurcate from this 2-dimensional isochronous using the averaging theory. Thus, within the same family, we have periodic solutions (in the case where the parameters create a smooth equation) and also pseudo-periodic solutions (in the case of Filippov systems). \\[3pt]
{\bf Keywords.} piecewise smooth differential equations, averaging theory, closed trajectory. \\[3pt]
{\small\bf AMS (MOS) subject classification:} 34A36, 37C29, 37H20, 34C28

\vskip.2in

\section{Introduction and Statement of the Results}

\noindent In this paper, we study the differential equation 
\begin{equation}\label{maineq}
	\dddot{z}+a\ddot{z}+b\dot{z}+abz=\varepsilon F(z,\dot{z},\ddot{z}).   
\end{equation}
A differential equation similar to this one was presented in the Chapter 2 of Barbashin's book \cite{Bseq}, having application in the study of the stability of automatic control systems with variable structure. 

The differential equation (\ref{maineq}) is equivalent to the following first-order differential system
\begin{equation}\left\{\label{sisper}
	\begin{array}{ccl}
		\dot{x} & = & y, \\
		\dot{y} & = & -ay-bx-abz+\varepsilon F(z,x,y),\\ 
		\dot{z} & = & x,
	\end{array}\right.
\end{equation}
where the dot denotes the derivative with respect to the time variable $t$. We are interested in studying the existence of closed trajectories for this differential system in two different situations. Hence, we will divide the paper into two parts.

In the first part, we consider $F(z,\dot{z},\ddot{z})=1$ and $b=sgn(h(z,\dot{z},\ddot{z}))$, where $h(z,\dot{z},\ddot{z})=z^2+(\dot{z})^2+(\ddot{z})^2-1$. Thus, the differential equation (\ref{maineq}) is equivalent to the piecewise smooth differential system
\begin{equation}\label{sdsp}
	Z_{X_-X_+}(x,y,z)=\left\lbrace 
	\begin{array}{cc}
		X_-(x,y,z), & x^2+y^2+z^2< 1,\\
		X_+(x,y,z), & x^2+y^2+z^2> 1,
	\end{array}\right.  
\end{equation}
where $X_-(x,y,z)=(y,-ay+x+az+\varepsilon,x)$ and $X_+(x,y,z)=(y,-ay-x-az+\varepsilon,x)$, which
admits $S^2=h^{-1}(\{0\})$, the unit sphere, as the discontinuity manifold. Over $S^2$ we assume that the dynamics of $Z_{X_-X_+}$ is provided by Filippov’s convention \cite{F}. We are interested in the existence of a closed trajectory for the differential system (\ref{sdsp}) that intersects $S^2$ in two points.

The existence of closed trajectories for piecewise smooth differential equations is an area of research that has been studied by several authors. For cases in which the piecewise smooth differential system belongs to $\mathbb{R}^2$, there are many works that determine the maximum number of limit cycles for a given class of vector fields \cite{LDTmcl, LXmcl, MR}, but there are still many open cases, such as the one in which the discontinuity manifold is a straight line \cite{JP3cl}. There are also studies in which the piecewise smooth differential system belongs to $\mathbb{R}^3$, for instance \cite{JM3D, LDM}, where the discontinuity manifold is a plane. Furthermore, one of the particular studies of discontinuous dynamical systems is the existence of pseudo-cycles which tends to appear due to the discontinuity of the system, intuitively seem to be crossing periodic orbits, but their orientation changes depending on the zone. This is the object of study in \cite{andrade2022homoclinic} and other articles.

Our main result for the first part is the following theorem.

\begin{theorem}\label{teo1}
	Consider $F(z,\dot{z},\ddot{z})=1$ and $b=\sgn(h(z,\dot{z},\ddot{z}))$, where 
$h(z,\dot{z},\ddot{z})$ $=z^2+(\dot{z})^2+(\ddot{z})^2-1$. If 
	$$\frac{|a|}{\sqrt{2}}<\varepsilon <|a| \quad \mbox{or} \quad -|a|< \varepsilon < -\frac{|a|}{\sqrt{2}},$$  
	then the system (\ref{sisper}) admits a pseudo-orbit.
\end{theorem}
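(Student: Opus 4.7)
The plan exploits the special linear structure of the exterior vector field $X_+$. Its linear part has characteristic polynomial $\lambda^3+a\lambda^2+\lambda+a=(\lambda+a)(\lambda^2+1)$, which signals a two-dimensional center-like invariant subspace. I would first verify that the affine plane $\Pi=\{y+z=\varepsilon/a\}$ is invariant under $X_+$, because $\tfrac{d}{dt}(y+z)=-a(y+z)+\varepsilon$ vanishes on $\Pi$, and that the restriction of $X_+$ to $\Pi$ (using $z=\varepsilon/a-y$) is the harmonic oscillator $\dot x=y$, $\dot y=-x$. This yields a one-parameter family of periodic orbits of $X_+$, each of which is an ellipse in $\mathbb{R}^3$,
\[
\gamma_r(t)=\Bigl(r\cos t,\,-r\sin t,\,\tfrac{\varepsilon}{a}+r\sin t\Bigr),\qquad r>0.
\]

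The next step is to single out the member of this family that meets $S^2$ in exactly two points. A direct calculation gives
\[
|\gamma_r(t)|^2 = r^2 + \tfrac{\varepsilon^2}{a^2} + 2\tfrac{\varepsilon}{a}\, r\sin t + r^2\sin^2 t,
\]
and the distinguished value $r=1$ reduces this to $|\gamma_1(t)|^2=1+(\varepsilon/a+\sin t)^2\ge 1$. Hence $\gamma_1$ lies entirely in the closure of the $X_+$-region $\{x^2+y^2+z^2\ge 1\}$ and touches $S^2$ precisely at the two points
\[
p_\pm=\Bigl(\pm\sqrt{1-(\varepsilon/a)^2},\,\tfrac{\varepsilon}{a},\,0\Bigr),
\]
which are real and distinct whenever $|\varepsilon|<|a|$. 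Computing the second derivative at each tangency,
\[
\tfrac{d^2}{dt^2}\bigl(|\gamma_1(t)|^2-1\bigr)\Big|_{\sin t=-\varepsilon/a}=2\bigl(1-(\varepsilon/a)^2\bigr)>0,
\]
shows that both tangencies are visible folds of $X_+$, so the orbit curves back into the $X_+$-region after touching the sphere. Consequently $\gamma_1$ is a closed Filippov trajectory of (\ref{sdsp}) intersecting $S^2$ in exactly two points.

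The step which I expect to be the main obstacle is isolating the precise role of the lower bound $|\varepsilon|>|a|/\sqrt 2$. The upper bound $|\varepsilon|<|a|$ appears naturally from the tangency analysis, but $1/\sqrt 2$ does not come out there. I would chase it through the Filippov classification of $S^2$ near $p_\pm$: compute the discontinuity functions $L_\pm=\tfrac12\langle X_\pm,\nabla h\rangle$, identify the crossing, sliding and escaping regions, and verify that the hypothesis $|\varepsilon/a|>1/\sqrt 2$ is precisely the condition under which the points $p_\pm$ lie in the admissible region for $\gamma_1$ to qualify as a pseudo-orbit in the sense used in the paper. If instead the authors' pseudo-orbit is a polycycle with two \emph{transverse} crossings of $S^2$, built by matching an $X_+$-arc outside with an $X_-$-arc inside, then the same bound should emerge from an associated shooting / implicit-function argument, with the matching being a codimension-two condition balanced against the free parameter $r$ and the free traversal time of the $X_-$-arc.
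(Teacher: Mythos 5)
Your treatment of the exterior field $X_+$ is correct, and in one respect cleaner than the paper's: the identity $\|\gamma_1(t)\|^2=1+(\varepsilon/a+\sin t)^2$ shows immediately that the $X_+$-orbit through the tangency points never enters the open unit ball, a fact the paper (Lemma \ref{norma}) establishes by a much more laborious intersection-with-an-ellipse computation. But the object you end up with --- the full closed orbit $\gamma_1$ of $X_+$, grazing $S^2$ at two visible folds --- is not the pseudo-orbit of the theorem, and the symptom is exactly the one you flag yourself: your construction works for every $0<|\varepsilon|<|a|$ and never uses the hypothesis $|\varepsilon|>|a|/\sqrt{2}$. The pseudo-orbit of the paper is the heart-shaped closed curve obtained by concatenating only the arc of $\gamma_1$ from $P=p_-$ to $Q=p_+$ that lies outside the ball with an arc of the \emph{interior} field $X_-$ joining the same two points inside the ball; both arcs run from $P$ to $Q$ in forward time, which is why the resulting closed curve is a pseudo-orbit rather than a genuine periodic orbit.

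The missing half of the argument is therefore the analysis of $X_-$, and that is precisely where the constant $1/\sqrt{2}$ lives. The field $X_-$ has the invariant affine plane $\alpha_-=\{-y+z=-\varepsilon/a\}$ on which it restricts to a linear saddle (eigenvalues $\pm 1$), and since $\alpha_-\cap\alpha_+$ is the line $\{z=0,\;y=\varepsilon/a\}$, this plane meets $S^2$ at the \emph{same} two points $P$ and $Q$; no shooting or implicit-function argument is needed, the matching is automatic (and the crossings of the interior arc at $P,Q$ are transversal for $X_-$, since $X_-h(P),X_-h(Q)\neq 0$, not tangential as in your first scenario). Solving $x_{\alpha_-}(t_-)=Q$ for the saddle arc started at $P$ gives $t_-=\ln\bigl(\tfrac{\sqrt{a^2-\varepsilon^2}+\varepsilon}{\varepsilon-\sqrt{a^2-\varepsilon^2}}\bigr)$, which is real and positive exactly when $\varepsilon>\sqrt{a^2-\varepsilon^2}$, i.e.\ $|a|/\sqrt{2}<\varepsilon<|a|$ (and negative on the mirror interval); one must also check that this saddle arc stays inside the unit ball for $0<t<t_-$. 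Your second speculative alternative points toward this construction, but none of it is carried out, so the proposal as written does not prove the stated theorem.
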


In the second part, we deal with the differential equation (\ref{maineq}) considering $\varepsilon \neq 0$ sufficiently small, $b>0$ and $F(z,\dot{z},\ddot{z})$ a polynomial of degree $n$, which has a family of isochronous periodic solutions in an invariant plane when $\varepsilon=0$.

In general, when we consider a vector field $Z_0$ in $\mathbb{R}^3$, having a $2$-dimensional isochronous subset $\mathcal{I}$, and we take the vector field $Z_{\varepsilon}$ as an $\varepsilon$-perturbation of $Z$, that is, $Z_{\varepsilon}=Z_0+\varepsilon Z$. The natural questions we ask are: Does the vector field $Z_{\varepsilon}$ have limit cycles emerging from $\mathcal{I}$? How many? How do we compute them? These are the questions that we intend to answer for the differential equation that we are considering.

The averaging theory is the tool to study these questions and many works have been done using it, with the objective of studying the limit cycles bifurcating from the periodic orbits of a $k$-dimensional isochronous center contained in $\mathbb{R}^n$ with $k\leq n$. Among them, the cases where the $2$-dimensional isochronous subsets contained in $\mathbb{R}^3$ are a cylinder \cite{LMcilindro} and a torus \cite{JSJtoro}, and the cases where the $2$-dimensional isochronous subsets contained in $\mathbb{R}^4$ are planes \cite{JMplano, JMJplane}.

Our main result for the second part is the following theorem.

\begin{theorem}\label{teo2}
	Assume $b>0$. For $\varepsilon=0$ the plane $$\alpha=\left\lbrace (x,y,z) \in \mathbb{R}^3\; |\; y+bz=0 \right\rbrace$$ is invariant with respect to the system $(\ref{sisper})|_{\varepsilon=0}$, which behaves like a center in $\alpha$. Now, 
	considering $\varepsilon \neq 0$ sufficiently small, if $F(z,\dot{z},\ddot{z})$ is a polynomial of degree $n$ and 
	\begin{align*}
		\mathcal{F}(r) & =\frac{1}{2\pi (a^2+b)}\displaystyle\int_0^{\frac{2\pi}{\sqrt{b}}} \left(\sqrt{b} \cos \left(\sqrt{b} \theta \right)-a \sin \left(\sqrt{b} \theta \right)\right) \\
		& \quad F\left(\sqrt{b} r \sin \left(\sqrt{b} \theta \right),-b r \cos \left(\sqrt{b} \theta \right),r \cos \left(\sqrt{b} \theta \right)\right) \, d\theta
	\end{align*}
	does not vanish identically, then
	\begin{itemize}
		\item[a)]if $n$ is odd, $\frac{n-1}{2}$ is the maximum number of limit cycles of the system (\ref{sisper}) that can bifurcate from the periodic orbits of the plane $\alpha=\left\lbrace (x,y,z) \in \mathbb{R}^3\; |\; y+bz=0 \right\rbrace$ of the system $(\ref{sisper})|_{\varepsilon =0}$,
		\item[b)] if $n$ is even, $\frac{n-2}{2}$ is the maximum number of limit cycles of the system (\ref{sisper}) that can bifurcate from the periodic orbits of the plane $\alpha=\left\lbrace (x,y,z) \in \mathbb{R}^3\; |\; y+bz=0 \right\rbrace$ of the system $(\ref{sisper})|_{\varepsilon =0}$.
	\end{itemize}	
\end{theorem}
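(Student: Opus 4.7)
The plan is to apply first-order averaging to a reduction of the perturbed system that exploits the normally hyperbolic geometry of the unperturbed flow.

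First, I would record the structure of $(\ref{sisper})|_{\varepsilon=0}$: its linear part has characteristic polynomial $(\lambda+a)(\lambda^2+b)$, hence eigenvalues $-a$ and $\pm i\sqrt{b}$. Introducing $w = y + bz$, a direct computation gives $\dot w = -aw + \varepsilon F$, so the plane $\alpha = \{w = 0\}$ is invariant, and normally hyperbolic when $a \neq 0$. The reduced dynamics on $\alpha$ is $\dot x = -bz$, $\dot z = x$, giving $\ddot z + bz = 0$, so every nontrivial orbit in $\alpha$ is a circle of period $T = 2\pi/\sqrt{b}$, and these form an isochronous one-parameter family indexed by the radius $r > 0$.

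Second, I would introduce coordinates $(r, \theta, w)$ adapted to this structure, with $(r, \theta)$ chosen so that an unperturbed orbit of radius $r$ on $\alpha$ reads $x = \sqrt{b}\,r\sin(\sqrt{b}\theta)$, $y = -br\cos(\sqrt{b}\theta)$, $z = r\cos(\sqrt{b}\theta)$ with $\dot\theta = -1$. Taking $\theta$ as the new independent variable, the perturbed system takes the form $dr/d\theta = O(w)$ and $dw/d\theta = aw/\sqrt{b} - \varepsilon F/\sqrt{b} + O(w^2, \varepsilon w)$. Since limit cycles bifurcating from $\alpha$ satisfy $w = O(\varepsilon)$, I would write $w = \varepsilon v$, solve the linear forced equation for the unique $T$-periodic solution $v(\theta; r)$, substitute it back into the $r$-equation and average over one period. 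Inverting the operator $d/d\theta - a/\sqrt{b}$ on $T$-periodic functions by Fourier series and pairing the resulting Green's function with the $\sin(\sqrt{b}\theta)$-factor coming from the $r$-equation produces exactly the kernel $\sqrt{b}\cos(\sqrt{b}\theta) - a\sin(\sqrt{b}\theta)$ and the normalization $1/(2\pi(a^2+b))$ appearing in $\mathcal F(r)$. The first-order averaging theorem then asserts that, for $|\varepsilon|$ sufficiently small, each simple positive zero $r_0$ of $\mathcal F(r)$ yields a unique limit cycle of (\ref{sisper}) bifurcating from the circle of radius $r_0$ in $\alpha$.

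The main combinatorial step is to count the isolated positive zeros of $\mathcal F(r)$ as $F$ ranges over polynomials of degree $n$. Expanding $F = \sum a_{ijk}\, x^i y^j z^k$ with $i+j+k \leq n$ and substituting the parameterization, each monomial contributes a scalar multiple of $r^{i+j+k}\sin^i(\sqrt{b}\theta)\cos^{j+k}(\sqrt{b}\theta)$; multiplication by $\sqrt{b}\cos(\sqrt{b}\theta) - a\sin(\sqrt{b}\theta)$ and integration over a period, combined with the parity identity $\int_0^{2\pi}\sin^p u\cos^q u\,du = 0$ unless both $p,q$ are even, determines precisely which total degrees $m$ contribute to the coefficient of $r^m$ in $\mathcal F(r)$, and hence the exact polynomial shape. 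Reading off this shape gives the upper bound on the number of simple positive zeros; the cases $n$ odd and $n$ even are distinguished by whether the highest admissible monomial degree is $n$ or $n-1$, and sharpness is obtained by constructing explicit polynomials $F$ of degree $n$ whose averaged functions realize the claimed maxima $(n+1)/2$ and $n/2$. The main obstacle is this last step: establishing surjectivity of the map from the coefficients of $F$ onto the admissible subspace of polynomials $\mathcal F(r)$ and exhibiting explicit $F$ that realize the sharp bounds, which amounts to a Chebyshev-system-like analysis on the image of the averaging map.
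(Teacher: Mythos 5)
Your overall strategy is sound and reaches the same bifurcation function, but your derivation of $\mathcal F(r)$ takes a genuinely different route from the paper's. The paper first diagonalizes the linear part with the eigenvector matrix $B$, so that in the new coordinates $(X,Y,Z)$ the perturbation $(0,\varepsilon F,0)$ is already distributed among the three equations; passing to $r^2=X^2+Y^2$ and to $\theta$ as independent variable then puts the system directly into the hypotheses of the averaging theorem for manifolds of periodic orbits (Theorem \ref{averaging}), where the transversal direction is handled by the fundamental matrix condition $\det\bigl(1-e^{-2\pi a/\sqrt b}\bigr)\neq 0$ and the kernel $\sqrt b\cos(\sqrt b\theta)-a\sin(\sqrt b\theta)$ falls out of the coordinate change itself, as the first component of $M_{z_r}^{-1}F_1$ restricted to $Z=0$. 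You instead keep $w=y+bz$, observe that with $r^2=x^2/b+z^2$ one has $r\dot r=xw/b$ so the $r$-equation has no direct $O(\varepsilon)$ term, and recover the kernel by inverting $d/d\theta-a/\sqrt b$ on periodic functions and pairing with $\sin(\sqrt b\theta)$. This is a legitimate Lyapunov--Schmidt-style alternative (the two radial coordinates agree on $\alpha$ and differ by $O(w)=O(\varepsilon)$ on candidate orbits, so the leading-order displacement functions coincide), and it makes the role of normal hyperbolicity ($a\neq 0$) more transparent; the paper's route is more mechanical but rests on a citable theorem rather than an ad hoc reduction.

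Two points need to be filled in before this is a proof. First, the central claim that the Green's-function pairing ``produces exactly the kernel $\sqrt b\cos(\sqrt b\theta)-a\sin(\sqrt b\theta)$ and the normalization $1/(2\pi(a^2+b))$'' is asserted, not computed, and it is the crux of your derivation; moreover the standard first-order averaging theorem does not apply verbatim to a system with nontrivial $F_0$, so you must either carry out the Poincar\'e-map argument for the slaved variable explicitly or invoke the manifold-of-periodic-orbits version as the paper does. Second, your zero count is left open: after the parity analysis only odd powers $r,r^3,\dots$ survive, but to conclude that every such power can actually occur with a freely adjustable nonzero coefficient (which is what makes the bound attainable) you need the nonvanishing of $\int_0^{2\pi/\sqrt b}\sin^{p}(\sqrt b\theta)\cos^{q}(\sqrt b\theta)\,d\theta$ for $p,q$ both even; the paper establishes this via the beta and gamma functions (Lemma \ref{betaint}) and then closes the count with the Chebyshev-system Lemma \ref{tsystem}. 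You correctly identify this as the remaining obstacle but do not resolve it.
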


This paper is divided as follows. Section 2 presents some fundamental concepts required for the proof of the main results,such as the basic definitions about piecewise smooth vector fields according to Filippov’s convention, the averaging theory and the Descartes' Theorem. In Section \ref{prteo1}, we prove Theorem \ref{teo1} with the support of some lemmas that are also proved in the same section. In Section \ref{prteo2}, we prove Theorem \ref{teo2} following the same structure as in Section \ref{prteo1}.  

\section{Preliminaries}\label{resim}

We are going to present the basic concepts that are requisite in the proof of the main results of this paper.

\subsection{Piecewise smooth differential equations.}
\label{fili}

In this subsection we recall some concepts about piecewise smooth differential equations, according to the Filippov's convention \cite{F}. 

Let $X_+$ and $X_-$ be smooth vector fields defined in an open and convex subset $U\subset \mathbb{R}^3$ and, without loss of generality, assume that the origin belongs to $U$. Consider $h:U\rightarrow\mathbb{R}$ a function $\mathcal{C}^r$, with $r>1$, ($\mathcal{C}^r$ denotes the set of continuously differentiable functions of order $r$), having $0$ as a regular value, and let the curve  $\Sigma=h^{-1}(0)\cap U$ be a submanifold that divides the open set $U$ in two open sets,  
$$\Sigma^{+}=\{(x,y,z)\in U\;|\;h(x,y,z)>0\} \; \mbox{and} \; \Sigma^{-}=\{(x,y,z)\in U\;|\;h(x,y,z)<0\}.$$

A piecewise smooth vector field is defined in the form 
\begin{equation}\label{sist.fil}
	Z_{X_-X_+}(x,y,z)=\begin{cases}
		X_+(x,y,z),& (x,y,z) \in \Sigma^{+}, \\
		X_-(x,y,z), & (x,y,z) \in \Sigma^{-}.
	\end{cases}
\end{equation}
We assume that $X_+$ and $X_-$ are vector fields of class $\mathcal{C}^k$, with $k>1$, in $\overline{\Sigma^{+}}$ and $\overline{\Sigma^{-}}$, respectively, where $\overline{\Sigma^{\pm}}$ denotes the closure of $\Sigma^{\pm}$.    
We denote by $\mathcal{Z}^k$ the space of vector fields of this type, that can be taken as $\mathcal{Z}^k=\mathcal{X}^k\times \mathcal{X}^k$, where, by abuse of notation, $\mathcal{X}^k$ denotes the set of vector fields of class $\mathcal{C}^k$ defined in $\overline{\Sigma^{+}}$ and $\overline{\Sigma^{-}}$. We consider $\mathcal{Z}^k$ with the product topology $\mathcal{C}^k$.  

We are going to divide the discontinuity submanifold $\Sigma$ in the closure of three disjoint regions. For this, we define the Lie's derivative of $h$ with respect to the field $X_{\pm}$ in $p$ as 
$$X_{\pm}h(p)=\langle X_{\pm}(p), \nabla h(p)\rangle.$$
Thus the regions are classified according to the following:
\begin{itemize}
	\item[(a)] Crossing region:  $\Sigma^{c}=\{p \in \Sigma\;|\; X_+h(p)\cdot X_-h(p) >0\},$
	\item[(b)] Sliding region:  $\Sigma^{s}=\{p \in \Sigma\;|\; X_+h(p) <0,  X_-h(p) >0\},$
	\item[(c)] Escape region:  $\Sigma^{e}=\{p \in \Sigma\;|\; X_+h(p) >0,  X_-h(p) <0\}.$
\end{itemize}
These three regions are open subsets of $\Sigma$ in the induced topology and can have more than one convex component. 

When defining the regions above we aren't including the tangent points, that is, the points $p \in \Sigma$ for which $X_+h(p)=0$ or $X_-h(p)=0$. These points are in the boundaries of the regions $\Sigma^{c}$, $\Sigma^{s}$ and $\Sigma^{e}$, that are going to be denoted by $\partial \Sigma^{c}$, $\partial \Sigma^{s}$ and $\partial\Sigma{}^{e}$, respectively. Moreover, if a point $p\in \Sigma$ satisfies $X_{\pm}h(p)=0$ and $X_{\pm}^2f(p)\neq 0$, then we call $p$ a fold tangent point, and if a point $p\in \Sigma$ satisfies $X_{\pm}h(p)=X_{\pm}^2h(p)=0$ and $X_{\pm}^3h(p)\neq 0$ then we call $p$ a cuspid tangent point, where $X_{\pm}^kh(p)=\langle X_{\pm}(p), \nabla X_{\pm}^{k-1}h(p)\rangle$ for $k\geq 2$. 

To establish the dynamic given by a vector field $Z_{X_-X_+}$ in $U$, we need to define the local trajectory through a point $p\in U$, that is, we must define the flow $\varphi_z(t,p)$ of (\ref{sist.fil}).
If $p \in \Sigma_{}^{\pm}$, then the trajectory through $p$ is given by the fields $X_+$ and $X_-$ in the usual way. 

If $p \in \Sigma^{c}$, both the vector fields $X_-$ and $X_+$ point to $\Sigma^{+}$ or $\Sigma^{-}$ and, therefore, it is sufficient to concatenate the trajectories of $X$ and $Y$ that pass through $p$. 

If $p\in \Sigma^{s}\cup \Sigma^{e}$, we have that the vector fields point to opposite directions, thus, we can't concatenate the trajectories. In this case, the local orbit is given by the Filippov's convention. Hence, we define the sliding vector field
\begin{equation}\label{campo_deslizante}
	Z^{s}(p)=\frac{1}{X_-h(p)-X_+h(p)}(X_-h(p)X(p)-X_+h(p)X_-(p)).
\end{equation}  
Note that $Z^s$ represents the convex linear combination of $X_+(p)$ and $X_-(p)$ so that $Z^{s}$ is tangent to $\Sigma$. Moreover, its trajectories are contained in $\Sigma^{s}$ or $\Sigma^{e}$. Thus, the trajectory through $p$ is the trajectory defined by the sliding vector field in (\ref{campo_deslizante}).

%Note that, if $X_{\pm}(p)=0$, then $X_{\pm}h(p)=0$, so the critical points of $X_{\pm}$ in $\Sigma$ are also included in the tangent points
%Now, if $X_+(p)\neq 0$ and $X_+h(p)=0$, we have that the trajectory of $X_+$ passing through $p$ is, indeed, tangent to $\Sigma$.

\subsection{Averaging theory} 

In this subsection, we present a result about the averaging  theory \cite{BFL}, that we will use to prove Theorem \ref{teo2}. 

We consider the problem of the bifurcation of $T$-periodic orbits from differential systems 
\begin{equation}\label{aver}
	{\bf \dot{x}}(t)=F_0(t,{\bf x})+\varepsilon F_1(t,{\bf x})+\varepsilon^2 F_2(t,{\bf x},\varepsilon),
\end{equation}	
with $\varepsilon \in (-\varepsilon_0,\varepsilon_0)$ and $\varepsilon_0>0$ sufficiently small. The functions $F_0, F_1:\mathbb{R}\times \Omega \rightarrow \mathbb{R}^n$ and $F_2: \mathbb{R}\times \Omega \times (-\varepsilon_0,\varepsilon_0) \rightarrow \mathbb{R}^n$ are $\mathcal{C}^k$ functions, with $k\geq 2$, $T$-periodic in the first variable, and $\Omega$ is an open subset of $\mathbb{R}^n$. The main assumption is that the unperturbed system ${\bf \dot{x}}(t)=F_0 (t,{\bf x})$ has a $m$-dimensional submanifold, with $1\leq m \leq n$, comprised of $T$-periodic orbits. 

Let ${\bf x}(t,{\bf z})$ be the solution of the unperturbed system $(\ref{aver})|_{\varepsilon=0}$ such that ${\bf x}(0,{\bf z})={\bf z}$. We write the linearization of the unperturbed system along the periodic solution ${\bf x}(t,{\bf z})$ as
\begin{equation}\label{linear} 
	{\bf \dot{y}}(t)=D_{\bf x}F_0(t,{\bf x}(t,{\bf z})){\bf y}.
\end{equation}
In the following, we denote by $M_z(t)$ a fundamental matrix of the linear differential system (\ref{linear}) and by $\xi: \mathbb{R}^k \times \mathbb{R}^{n-k} \rightarrow \mathbb{R}^k$ the projections of $\mathbb{R}^n$ onto its first $k$ and $n-k$ coordinates, respectively, that is, $\xi(x_1,\dots, x_n)=(x_1,\dots x_k)$.

\begin{theorem}\label{averaging}
	Let $V \subset \mathbb{R}^k$ be an open and bounded subset and let $\beta_0 :\overline{V} \rightarrow \mathbb{R}^{n-k}$ be a $\mathcal{C}^k$ function. We assume that
	\begin{itemize}
		\item[(a)] $ \mathcal{Z}=\{{\bf z}_{\alpha} = (\alpha,\beta_0 ( \alpha))\;|\; \alpha \in \overline{V}\} \subset \Omega$ and that for each ${\bf z}_{\alpha} \in \mathcal{Z}$ the solution ${\bf x}(t,{\bf z}_{\alpha})$ of $(\ref{aver})|_{\varepsilon=0}$ is $T$-periodic, \\
		\item[(b)] for each ${\bf z}_{\alpha}\in \mathcal{Z}$ there is a fundamental matrix $M_{{\bf z}_{\alpha}}(t)$ of (\ref{linear}) such that the matrix $M_{{\bf z}_{\alpha}}^{-1}(0)-M_{{\bf z}_{\alpha}}^{-1}(T)$ has in the upper right corner the $k\times (n-k)$ zero matrix, and in the lower right corner a $(n-k)\times (n-k)$ matrix $\Delta_{\alpha}$ with $\det (\Delta_{\alpha}) \neq 0$. 
	\end{itemize}
	Consider 
	$$\mathcal{F}(\alpha)=\xi \left( \frac{1}{T} \int^{T}_{0} M_{{\bf z}_{\alpha}}(t) F_1(t,{\bf x}(t,{\bf z}_{\alpha})) \right)\;dt.$$
	If there exists $a_0 \in V$ with $\mathcal{F}(a_0)=0$ and $\det\left( \frac{d \mathcal{F}}{d \alpha} (a_0)\right)\neq 0$, then there is a $T$-periodic solution $\varphi(t,\varepsilon)$ of the system (\ref{aver}) such that $\varphi(t,\varepsilon) \rightarrow {\bf x}(t, {\bf z}_{a_0})$ as $\varepsilon \rightarrow 0$.    
\end{theorem}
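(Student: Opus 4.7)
The plan is to set up a displacement map whose zeros correspond to $T$-periodic initial conditions of the perturbed system, and then analyze it through two successive applications of the implicit function theorem. Denote by ${\bf x}(t, {\bf z}, \varepsilon)$ the solution of (\ref{aver}) with ${\bf x}(0, {\bf z}, \varepsilon) = {\bf z}$, and define the displacement $d({\bf z}, \varepsilon) = {\bf x}(T, {\bf z}, \varepsilon) - {\bf z}$. The system (\ref{aver}) admits a $T$-periodic solution through ${\bf z}$ precisely when $d({\bf z}, \varepsilon) = 0$. Guided by hypothesis (a), I parametrize ${\bf z}$ near ${\bf z}_{a_0} = (a_0, \beta_0(a_0))$ by writing ${\bf z} = (\alpha, \beta_0(\alpha) + \eta)$ with $\alpha \in V$ and $\eta \in \mathbb{R}^{n-k}$ small, so the target branch of periodic solutions will correspond to small $\varepsilon$ and a small correction $\eta$.

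The next step is to Taylor-expand $d$ to first order in $(\eta, \varepsilon)$. Hypothesis (a) yields $d(\alpha, \beta_0(\alpha), 0) \equiv 0$ on $CL(V)$; smooth dependence on initial data together with the variation of parameters formula then gives
\begin{equation*}
    d(\alpha, \beta_0(\alpha) + \eta, \varepsilon) = \bigl[M_{{\bf z}_\alpha}(T) M_{{\bf z}_\alpha}^{-1}(0) - I\bigr] \binom{0}{\eta} + \varepsilon\, M_{{\bf z}_\alpha}(T) \int_0^T M_{{\bf z}_\alpha}^{-1}(s) F_1(s, {\bf x}(s, {\bf z}_\alpha, 0))\, ds + o(|\eta| + |\varepsilon|).
\end{equation*}
Splitting $\mathbb{R}^n = \mathbb{R}^k \oplus \mathbb{R}^{n-k}$ via the projection $\xi$ and its complement, write $d = (d^{(1)}, d^{(2)})$. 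Hypothesis (b) on the block structure of $M_{{\bf z}_\alpha}^{-1}(0) - M_{{\bf z}_\alpha}^{-1}(T)$ is exactly what is needed to decouple the two blocks: the $\eta$-Jacobian of $d^{(1)}$ at $(\alpha, 0, 0)$ vanishes, while the $\eta$-Jacobian of $d^{(2)}$ equals (a conjugate of) $-\Delta_\alpha$ and is therefore invertible for every $\alpha \in CL(V)$.

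I then apply the implicit function theorem twice. Invertibility of $\Delta_\alpha$ allows us to solve $d^{(2)}(\alpha, \eta, \varepsilon) = 0$ uniquely for $\eta = \eta(\alpha, \varepsilon)$ with $\eta(\alpha, 0) = 0$. Substituting back into $d^{(1)}$ and using that $d^{(1)}$ vanishes at $\varepsilon = 0$ lets us factor out $\varepsilon$, reducing the remaining $k$ equations to $\mathcal{F}(\alpha) + O(\varepsilon) = 0$, where $\mathcal{F}$ coincides (up to the normalizing $1/T$) with the projection of the $\varepsilon$-coefficient above and hence with the function defined in the statement. The hypothesis $\mathcal{F}(a_0) = 0$ together with $\det(d\mathcal{F}/d\alpha(a_0)) \neq 0$ then yields, via a second application of the implicit function theorem, a unique smooth branch $\alpha(\varepsilon)$ with $\alpha(0) = a_0$; the corresponding initial condition ${\bf z}(\varepsilon) = (\alpha(\varepsilon), \beta_0(\alpha(\varepsilon)) + \eta(\alpha(\varepsilon), \varepsilon))$ produces the desired $T$-periodic solution $\varphi(t, \varepsilon)$, and continuous dependence on $\varepsilon$ gives $\varphi(t, \varepsilon) \to {\bf x}(t, {\bf z}_{a_0})$ as $\varepsilon \to 0$.

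The main technical obstacle is the careful identification of the block structure in the displacement's linearization: one must verify that hypothesis (b) (phrased in terms of $M^{-1}(0) - M^{-1}(T)$) is precisely what makes the $\eta$-Jacobian of $d^{(2)}$ nonsingular while annihilating that of $d^{(1)}$, and that after dividing by $\varepsilon$ the reduced equation's leading term matches $\mathcal{F}$ in the coordinates set by $\xi$. Once this bookkeeping is done, the two applications of the implicit function theorem and the asymptotic convergence of $\varphi(t, \varepsilon)$ follow by standard arguments based on the $\mathcal{C}^k$ regularity ($k \geq 2$) of the data.
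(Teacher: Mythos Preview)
The paper does not prove this theorem; it is stated in the preliminaries (Section~2.2) as a known result quoted from \cite{BFL} and is then used as a black box in the proof of Lemma~\ref{bifurcate} and Theorem~\ref{teo2}. So there is no ``paper's own proof'' to compare against.

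That said, your outline is essentially the standard argument one finds in the cited reference: study the displacement map $d({\bf z},\varepsilon)={\bf x}(T,{\bf z},\varepsilon)-{\bf z}$, expand via variation of constants, exploit the block structure of hypothesis~(b) to carry out a Lyapunov--Schmidt reduction (first solving the $(n-k)$ transverse equations for $\eta$, then the remaining $k$ equations for $\alpha$), and identify the reduced bifurcation function with $\mathcal{F}$. One point worth making precise in a full write-up: the block structure of hypothesis~(b) is stated for $M^{-1}(0)-M^{-1}(T)$, whereas $\partial d/\partial{\bf z}\big|_{\varepsilon=0}=M(T)M^{-1}(0)-I$. The decoupling you need appears only after premultiplying the displacement by $M^{-1}(T)$, since $M^{-1}(T)\bigl[M(T)M^{-1}(0)-I\bigr]=M^{-1}(0)-M^{-1}(T)$; your parenthetical ``(a conjugate of)'' gestures at this, but the step deserves to be spelled out rather than absorbed into ``bookkeeping''. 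Note also that the paper's own application of the theorem (in Lemma~\ref{bifurcate}) uses $M_{z_r}^{-1}(\theta)$ inside the integral defining $\mathcal{F}$, consistent with your variation-of-constants formula, even though the theorem statement as written has $M_{{\bf z}_\alpha}(t)$; this is evidently a typo in the statement rather than in your argument.
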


\subsection{Descartes' Theorem} 

Now, we present the Descartes' Theorem, which is a fundamental tool to study the number of isolated periodic orbits bifurcating from a period annulus. For more details, see \cite{LMdescarte}.

\begin{theorem}[Descartes' Theorem]\label{descarte}
	Let $p(x)=a_{i_1}x^{i_1}+\dots+a_{i_r}x^{i_r}$ be a polynomial with real coefficients, $r>0$, $0\leq i_1 < \dots <i_r$ and $a_{i_j}$ are not simultaneously zero for $j\in \{1,2,\dots, r\}$. Let $s$  be the number of sign changes in the sequence $a_{i_1},\dots, a_{i_r}$, that is, the number of pairs of consecutive terms in the sequence that have opposite signs. Let $z$ be the number of positive real roots of $p(x)$ (counted with multiplicity). Then, $s-z$ is a non negative even number. Moreover, $p(x)$ has at most $r-1$ positive real roots. 
\end{theorem}

\begin{proof}
	Firstly, we are going to show that $s-z$ is even. Suppose, without loss of generality, that $a_{i_1}>0$, thus $p(0)\geq 0$. If $a_{i_r}>0$, we get $p(x)\rightarrow +\infty$ as $x\rightarrow +\infty$, thus the graph of the polynomial $p$ must cross the positive part of the $x$-axis an even number of times, that is, $z$ is even. Furthermore, $s$ is even, since the signs of $a_{i_1}$ and $a_{i_r}$ are positives. Then, $s-z$ is even. Now if $a_{i_r}<0$, we obtain  $p(x)\rightarrow -\infty$ as $x\rightarrow +\infty$, thus the graph of the polynomial $p$ must cross the positive part of the $x$-axis an odd number of times, that is, $z$ is odd. Moreover, $s$ is odd, since the signs of $a_{i_1}$ and $a_{i_r}$ are opposites. Hence, $s-z$ is even.
	
	Next, we are going to show that $s-z$ is non negative, that is, $s\geq z$. For this, we are going to use the principle of finite induction on $r$ and suppose $i_1=0$, without loss of generality. 
	
	If $r=2$, we can write $p(x)=ax^n+b$. If $a>0$, we have $p'(x)=nax^{n-1}>0$ for all $x>0$, hence, $p$ is strictly increasing for $x>0$. We know that $p(0)=b$ and, for all $x>0$, the polynomial $p$ is increasing. Then, if $b<0$, we get $s=1$ and for a certain $x>0$ the polynomial has value equal to zero, that is, there is an unique positive real root. Now, if $b>0$, we have $s=0$ and $p(x)>0$ for all $x>0$, that is, admits zero roots. For $a<0$, we have $p'(x)<0$ for all $x>0$, thus the same argument is valid changing the signs. 
	
	Suppose that all polynomials with $r=n-1$ satisfy $s\geq z$ and take a polynomial $p(x)=a_{i_1}x^{i_1}+\dots+a_{i_r}x^{i_n}$. Considering the polynomial $p'(x)$, we have that the number of sign changes $s'$ of $p'(x)$ is either $s$ or $s-1$, depending on whether there is a sign change at the last coefficient of $p(x)$. Moreover, the number of positive zeros $z'$ of $p'(x)$ is at least $z-1$, because between any two zeros of $p$ there is a zero of $p'$ by Rolle's Theorem. % (see \cite{calculus}).
	Since $p'(x)$ has $n-1$ terms, the inductive hypothesis implies that $s'\geq z'$, hence
	$$s\geq s' \geq z'\geq z-1.$$
	Thus, $s-z \geq -1$, but since $s-z$ is even, we get $s-z \geq 0$.
	
	Finally, we have that $p(x)$ has $r$ non-zero terms, thus it admits at most $r-1$ sign changes. Therefore, since $s-z \geq 0$ implies $z \leq s$, we get that $p(x)$ has at most $r-1$ positive real roots. 
\end{proof}

\section{Proof of Theorem 1}\label{prteo1}

In this section, we show some results concerning the piecewise smooth differential system which admits the unit sphere as the discontinuity manifold. The main result presents conditions for the existence of pseudo-orbits. 

Consider $F(z,\dot{z},\ddot{z})=1$ and $b=\sgn(h(z,\dot{z},\ddot{z}))$, where $h(z,\dot{z},\ddot{z})=z^2+(\dot{z})^2+(\ddot{z})^2-1$. Then, we get $\dddot{z}+a\ddot{z}+b\dot{z}+abz=\varepsilon$, which is equivalent to the piecewise smooth differential system (\ref{sdsp}).

Initially, we are going to prove some lemmas in which the proof of Theorem \ref{teo1} is based.

\begin{lemma}
	Consider the system (\ref{sdsp}).
	\begin{itemize}
		\item[a)] The plane $\alpha_-=\left\lbrace (x,y,z) \in \mathbb{R}^3\; |\; -y+z=-\frac{\varepsilon}{a} \right\rbrace$ is invariant with respect to the vector field $X_-$, which behaves like a saddle in $\alpha_-$.
		\item[b)] The plane $\alpha_+=\left\lbrace (x,y,z) \in \mathbb{R}^3\; |\; y+z=\frac{\varepsilon}{a} \right\rbrace$ is invariant with respect to the vector field $X_+$, which behaves like a center in $\alpha_+$. 
	\end{itemize}
\end{lemma}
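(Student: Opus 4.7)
The plan is a direct computation verifying invariance and identifying the induced dynamics. The two statements are parallel, so I would handle them in the same way; nothing subtle is expected.

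For part (a), define $\phi(x,y,z) = -y + z$ and compute its time-derivative along $X_-$:
\begin{equation*}
\dot\phi = -\dot y + \dot z = -(-ay + x + az + \varepsilon) + x = -a(-y+z) - \varepsilon = -a\phi - \varepsilon.
\end{equation*}
Hence the level set $\{\phi = -\varepsilon/a\}$ is forward- and backward-invariant for $X_-$, which is precisely $\alpha_-$. To identify the dynamics on $\alpha_-$, parametrize the plane by $(x,z)$ via $y = z + \varepsilon/a$. Substituting into $X_-$ yields
\begin{equation*}
\dot x = z + \frac{\varepsilon}{a}, \qquad \dot z = x,
\end{equation*}
a linear system whose matrix $\bigl(\begin{smallmatrix}0 & 1 \\ 1 & 0\end{smallmatrix}\bigr)$ has eigenvalues $\pm 1$; thus $X_-|_{\alpha_-}$ is a saddle, with equilibrium at $(x,z)=(-\varepsilon/a,0)$ (so $y=0$).

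For part (b), set $\psi(x,y,z) = y + z$ and compute, now along $X_+$,
\begin{equation*}
\dot\psi = \dot y + \dot z = (-ay - x - az + \varepsilon) + x = -a(y+z) + \varepsilon = -a\psi + \varepsilon,
\end{equation*}
so $\{\psi = \varepsilon/a\} = \alpha_+$ is invariant under $X_+$. Parametrizing $\alpha_+$ by $(x,z)$ with $y = -z + \varepsilon/a$ and restricting $X_+$ gives
\begin{equation*}
\dot x = -z + \frac{\varepsilon}{a}, \qquad \dot z = x,
\end{equation*}
with matrix $\bigl(\begin{smallmatrix}0 & -1 \\ 1 & 0\end{smallmatrix}\bigr)$ of eigenvalues $\pm i$; hence $X_+|_{\alpha_+}$ is a linear center around the equilibrium $(x,z) = (\varepsilon/a, 0)$.

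I do not expect any real obstacle here: each claim reduces to a one-line linear ODE computation for the defining function, together with reading off the eigenvalues of a constant $2\times 2$ matrix on the parametrized plane. The only thing worth noting is the sign convention for $\alpha_-$ (using $\phi=-y+z$ so that the eigenvalue computation gives the hyperbolic $\pm 1$ rather than the elliptic $\pm i$), which is exactly what makes the saddle versus center dichotomy emerge from the two sign choices in $X_\pm$.
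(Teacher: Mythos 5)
Your proof is correct, but it takes a genuinely different and in fact cleaner route than the paper. The paper diagonalizes the full $3\times 3$ linear part: it computes all three eigenvalues ($-a$ and $\pm 1$ for $X_-$, $-a$ and $\pm i$ for $X_+$), extracts the corresponding eigenvectors, recovers the plane as the affine span of the relevant two eigenvectors through the equilibrium via a cross product, and then double-checks invariance by writing down the explicit flow and substituting it into the plane equation. You instead observe that the defining linear functional satisfies a scalar ODE, $\dot\phi=-a\phi-\varepsilon$ (resp.\ $\dot\psi=-a\psi+\varepsilon$), so that the level set $\phi=-\varepsilon/a$ (resp.\ $\psi=\varepsilon/a$) is invariant by uniqueness of solutions, and then you read off saddle versus center from the $2\times 2$ matrix of the restricted system, with eigenvalues $\pm 1$ versus $\pm i$. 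Your invariance argument is shorter and avoids both the eigenvector computation and the explicit solution; the paper's approach has the side benefit of exhibiting the third eigendirection $(-a,a^2,1)$ and the explicit flows $x_{\alpha_\pm}(t)$, which are reused in the subsequent lemmas, so the extra work is not wasted there. One small slip in your write-up: the equilibria of your restricted systems are $(x,z)=(0,-\varepsilon/a)$ for $X_-|_{\alpha_-}$ and $(x,z)=(0,\varepsilon/a)$ for $X_+|_{\alpha_+}$ (you transposed the coordinates); with that correction your remark that $y=0$ at the equilibrium is consistent with the paper's singularities $(0,0,\mp\varepsilon/a)$... more precisely $(0,0,-\varepsilon/a)$ for $X_-$ and $(0,0,\varepsilon/a)$ for $X_+$.
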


\begin{proof}
	a) We have that   
	$$X_-(x,y,z)=
	\left(\begin{array}{ccc}
		0 & 1 & 0\\
		1 & -a & a\\
		1 & 0 & 0
	\end{array} \right) \left(\begin{array}{c}
		x\\
		y\\
		z
	\end{array} \right)+\left(\begin{array}{c}
		0\\
		\varepsilon\\
		0
	\end{array} \right)$$
	admits $S_-=\left(0,0,-\frac{\varepsilon}{a} \right)$ as singularity. Furthermore, the characteristic polynomial of the matrix
	$$\left( \begin{array}{ccc}
		0 & 1 & 0\\
		1 & -a & a\\
		1 & 0 & 0
	\end{array} \right)$$
	is $p_{X_-}(\lambda)=-\lambda^3-a\lambda^2+\lambda+a$, thus the eigenvalues of $X_-$ are given by $\lambda_{1_-}=-a$, $\lambda_{2_-}=1$ and $\lambda_{3_-}=-1$.
	Determining the corresponding eigenvectors, we obtain 
		$v_{1_-}=(-a,a^2,1)$, $v_{2_-}=(1,1,1)$ and $v_{3_-}=(-1,1,1)$, relative to the eigenvalues $\lambda_{1_-}$, $\lambda_{2_-}$ and $\lambda_{3_-}$, respectively. 
		
		From these information, we can observe that the vector field $X_-$ behaves like a saddle in the plane spanned by the vectors $v_{2_-}$ and $v_{3-}$ with origin at the point $S_-$, that is, the vector $v_{2_-}$ starts at the point $S_-$ and ends at the point $\left(1,1,1-\frac{\varepsilon}{a}\right)$, while the vector $v_{3-}$ starts at the point $S_-$ and ends at the point $\left(-1,1,1-\frac{\varepsilon}{a}\right)$. Hence, we have 
		$$v_{2_-}\times v_{3_-}=\left| \begin{array}{ccc}
			i & j & k\\
			1 & 1 & 1\\
			-1 & 1 & 1
		\end{array}\right| = -2j+2k= (0,-2,2),$$
		thus, the plane satisfies
		$$\left(x,y,z+\frac{\varepsilon}{a}\right)\cdot (0,-2,2)=0 \Rightarrow -y+z=-\frac{\varepsilon}{a},$$
		that is,
		$$\alpha_-=\left\lbrace (x,y,z)\in \mathbb{R}^3|-y+z=-\frac{\varepsilon}{a}\right\rbrace .$$
		
		Finally, the solution of $X_-$ with initial condition $\left(x_0,y_0,-\frac{\varepsilon}{a}+y_0\right)$ is
        \begin{align}\label{solx-}
				x_{\alpha_-}(t)=&\left( x_0 \cosh (t)+y_0 \sinh (t),x_0 \sinh (t)+y_0 \cosh (t),\right.\\ \nonumber
            & \left.x_0 \sinh (t)+y_0 \cosh (t)-\frac{\varepsilon}{a}\right),
			\end{align}
		which belongs to $\alpha_-$, since
		$$-\frac{\varepsilon}{a}+x_0 \sinh (t)+y_0 \cosh (t)-\left( x_0 \sinh (t)+y_0 \cosh (t)\right) =-\frac{\varepsilon}{a}.$$
		Therefore, $\alpha_-$ is invariant with respect to the vector field $X_-$. 
		\\~\\
		b) We have that 
		$$X_+(x,y,z)=
		\left(\begin{array}{ccc}
			0 & 1 & 0\\
			-1 & -a & -a\\
			1 & 0 & 0
		\end{array} \right) \left(\begin{array}{c}
			x\\
			y\\
			z
		\end{array} \right)+\left(\begin{array}{c}
			0\\
			\varepsilon\\
			0
		\end{array} \right)$$
		admits $S_+=\left(0,0,\frac{\varepsilon}{a} \right)$ as singularity. Moreover, the characteristic polynomial of the matrix
		$$\left( \begin{array}{ccc}
			0 & 1 & 0\\
			-1 & -a & -a\\
			1 & 0 & 0
		\end{array} \right)$$ 
		is $p_{X_+}(\lambda)=-\lambda^3-a\lambda^2-\lambda-a$, thus the eigenvalues of $X_-$ are given by $\lambda_{1_+}=-a$, $\lambda_{2_+}=i$ and $\lambda_{3_+}=-i$. Determining the corresponding eigenvectors, we obtain  $v_{1_+}=(-a,a^2,1)$,  $v_{2_+}=(i,-1,1)=(0,-1,1)+i(1,0,0)$ and $v_{3_+}=(-i,-1,1)=(0,-1,1)+i(1,0,0)$, relative to the eigenvalues $\lambda_{1_+}$, $\lambda_{2_+}$ and $\lambda_{3_+}$, respectively.  
			
			From these information, we can observe that the vector field $X_+$ behaves like a center in the plane spanned by the vectors $w_{1_+}=(0,-1,1)$ and $w_{2_+}=(1,0,0)$ with origin at the point $S_+$, that is, the vector $w_{1_+}$ starts at the point $S_+$ and ends at the point $\left(0,-1,1+\frac{\varepsilon}{a}\right)$, while the vector $w_{2_+}$ starts at the point $S_+$ and ends at the point $\left(1,0,\frac{\varepsilon}{a}\right)$. Hence, we have 
			$$w_{1_+}\times w_{2_+}=\left| \begin{array}{ccc}
				i & j & k\\
				0 & -1 & 1\\
				1 & 0 & 0
			\end{array}\right| = j+k= (0,1,1),$$
			thus, the plane satisfies
			$$\left(x,y,z-\frac{\varepsilon}{a}\right)\cdot (0,1,1)=0 \Rightarrow y+z=\frac{\varepsilon}{a},$$
			that is, 
			$$\alpha_+=\left\lbrace (x,y,z) \in \mathbb{R}^3 | y+z=\frac{\varepsilon}{a} \right\rbrace .$$
			
			We note that the solution of $X_+$, with initial condition $\left(x_0,y_0,\frac{\varepsilon}{a}-y_0\right)$ is 
			\begin{align}\label{solx+}
				x_{\alpha_+}(t)=&\left( x_0 \cos (t)+y_0 \sin (t),y_0 \cos (t)-x_0 \sin (t),\right.\\ \nonumber
                &\left.\frac{\varepsilon}{a}+x_0 \sin (t)-y_0 \cos (t)\right),
			\end{align}
			which belongs to $\alpha_+$, since  
			$$\frac{\varepsilon}{a}+x_0 \sin (t)-y_0 \cos (t)+\left( y_0 \cos (t)-x_0 \sin (t)\right) =\frac{\varepsilon}{a}.$$
			Therefore, $\alpha_+$ is invariant with respect to the vector field $X_+$.
		\end{proof}
		
		\begin{lemma}\label{tangencia}
			The intersection points between the planes $\alpha_-$, $\alpha_+$ and the sphere $S^2$ are tangency points of $Z_{X_-X_+}$. 
		\end{lemma}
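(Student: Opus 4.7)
My plan is first to find $\alpha_- \cap \alpha_+ \cap \Sigma$ in closed form (where $\Sigma$ is the unit sphere $\{x^2+y^2+z^2=1\}$), and then to verify by a direct Lie-derivative computation that $X_+ h$ vanishes at each point of this intersection; by the definition of tangent point recalled in Section \ref{fili}, this is exactly what it means for these points to be tangency points of $Z_{X_-X_+}$.

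For the first step, adding and subtracting the two plane equations $-y+z = -\varepsilon/a$ and $y + z = \varepsilon/a$ immediately forces $z = 0$ and $y = \varepsilon/a$, so $\alpha_- \cap \alpha_+$ is a line parallel to the $x$-axis. Intersecting with $\Sigma$ gives exactly the two points
$$p_\pm = \left(\pm\sqrt{1 - \tfrac{\varepsilon^2}{a^2}},\; \tfrac{\varepsilon}{a},\; 0\right),$$
which are real under the hypothesis $|\varepsilon| < |a|$ built into Theorem \ref{teo1}.

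For the second step, using $\nabla h = (2x,2y,2z)$ and the explicit expression of $X_+$ from system (\ref{sdsp}), the Lie derivative simplifies to $X_+ h = -2ay^2 - 2ayz + 2\varepsilon y + 2xz$. Evaluating at $p_\pm$, the substitutions $z = 0$ and $y = \varepsilon/a$ make the two remaining nonzero terms cancel exactly, giving $X_+h(p_\pm) = 0$. Hence $p_\pm$ are tangency points of $Z_{X_-X_+}$.

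The argument is essentially one short algebraic check, so I do not anticipate a serious obstacle. What makes the vanishing conceptually inevitable is the invariance of $\alpha_+$ proved in the preceding lemma: the $X_+$-orbit through $p_\pm$ is a closed ellipse lying in the center plane $\alpha_+$, and $\alpha_+ \cap \Sigma$ is itself an ellipse on $\Sigma$, so the orbit can only meet $\Sigma$ tangentially at $p_\pm$. A parallel evaluation of $X_- h$ at $p_\pm$ yields $4\sqrt{1-\varepsilon^2/a^2}\cdot \varepsilon/a \neq 0$, so the tangency is carried by $X_+$ alone; if later arguments require classifying the tangency as a fold, one could confirm this by checking the sign of $X_+^2 h(p_\pm)$.
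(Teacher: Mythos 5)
Your proposal is correct and follows essentially the same route as the paper: solve the two plane equations to get $z=0$, $y=\varepsilon/a$, intersect with the unit sphere to obtain the two points, and verify by direct computation that $X_+h$ vanishes there (while $X_-h$ does not). The added remark explaining the vanishing via the invariance of the center plane $\alpha_+$ is a nice conceptual supplement but does not change the argument.
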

		
		\begin{proof}
			From the equations of the planes $\alpha_-$ and $\alpha_+$, we get that $z=0$ and $y=\frac{\varepsilon}{a}$. Substituting these values in the equation of the sphere $S^2$, that is, $x^2+y^2+z^2=1$,  we have
			$  x=\pm \frac{\sqrt{a^2-\varepsilon^2}}{a}.$
			Thus, we get two points,
			$$P=\left(- \frac{\sqrt{a^2-\varepsilon^2}}{a},\frac{\varepsilon}{a},0 \right) \quad \mbox{and} \quad Q=\left( \frac{\sqrt{a^2-\varepsilon^2}}{a},\frac{\varepsilon}{a},0 \right).$$
			
			We have that $h(x,y,z)=x^2+y^2+z^2-1$, then $\nabla h(x,y,z)=(2x, 2y, 2z)$, which implies
			$$X_-h(x,y,z)=  4xy-2ay^2+2ayz+2\varepsilon y+2xz$$
            and
            $$ X_+h(x,y,z) = -2ay^2-2ayz+2\varepsilon y+2xz.  $$
			Thus, 
			$$X_-h(P)=-\frac{4\varepsilon\sqrt{a^2-\varepsilon^2}}{a^2}, \quad X_-h(Q)=\frac{4\varepsilon\sqrt{a^2-\varepsilon^2}}{a^2},$$
            $$ X_+h(P)=0 \quad\mbox{and} \quad X_+h(Q)=0.$$
			Therefore, $P$ and $Q$ are tangency points.
		\end{proof}
		
		\begin{lemma}\label{tempos}
			Consider the solutions $x_{\alpha_+}(t)$ and $x_{\alpha_-}(t)$ with the point $P$ as initial condition. Then
			\begin{itemize}
				\item[a)] there is $t_-$ such that $x_{\alpha_-}(t_-)=Q$;
				\item[b)] there is $t_+$ such that $x_{\alpha_+}(t_+)=Q$.  
			\end{itemize}	
		\end{lemma}
		
		\begin{proof}
			a) By equation (\ref{solx-}), we know that the solution of the vector field $X_-$ with initial condition being the point $P$ is
			\begin{align*}
				x_{\alpha_-}(t)=&\left( \frac{\varepsilon \sinh (t)-\sqrt{a^2-\varepsilon^2} \cosh (t)}{a},\frac{\varepsilon \cosh (t)-\sqrt{a^2-\varepsilon^2} \sinh (t)}{a}\right. ,\\
				&\left. -\frac{\sqrt{a^2-\varepsilon^2} \sinh (t)-\varepsilon \cosh (t)+\varepsilon}{a}\right).
			\end{align*}
			
			We have that $x_{\alpha_-}(t_-)=Q$ implies
			\begin{align*}
                &\frac{\varepsilon \sinh (t_-)-     \sqrt{a^2-\varepsilon^2} \cosh (t_-)-\sqrt{a^2- \varepsilon^2}}{a}\\
                =&-\frac{\sqrt{a^2-    \varepsilon^2} \sinh (t_-)-\varepsilon \cosh (t_-)+\varepsilon}{a},
                \end{align*}
			which after some manipulations leads to
			$$t_-=\ln\left(\frac{\sqrt{a^2-\varepsilon^2}+\varepsilon}{\varepsilon-\sqrt{a^2-\varepsilon^2}} \right). $$
			\\~\\
			b) By equation (\ref{solx+}), we get that the solution of the vector field $X_+$ with initial condition being the point $P$ is
			\begin{align*} 
				x_{\alpha_+}(t)=&\left( \frac{\varepsilon \sin (t)-\sqrt{a^2-\varepsilon^2} \cos (t)}{a},\frac{\sqrt{a^2-\varepsilon^2} \sin (t)+\varepsilon \cos (t)}{a} ,\frac{-\sqrt{a^2-\varepsilon^2} \sin (t)-\varepsilon \cos (t)+\varepsilon}{a}\right). 
			\end{align*}
			
			We note that  $x_{\alpha_+}(t_+)=Q$ implies
			\begin{align*}
            \frac{\varepsilon \sin (t_+)-\sqrt{a^2-\varepsilon^2} \cos (t_+)-\sqrt{a^2-\varepsilon^2}}{a}=-\frac{-\sqrt{a^2-\varepsilon^2} \sin (t_+)-\varepsilon \cos (t_+)+\varepsilon}{a},
                \end{align*}
			which leads to
			$$\frac{\sin(t_+)+1}{\cos(t_+)}=\frac{\sqrt{a^2-\varepsilon^2}+\varepsilon}{\varepsilon-\sqrt{a^2-\varepsilon^2}}.$$
			Denoting $d=\dfrac{\sqrt{a^2-\varepsilon^2}+\varepsilon}{\varepsilon-\sqrt{a^2-\varepsilon^2}}$, we have that
			$$
			\dfrac{-1+d}{1+d}=\dfrac{-1+\dfrac{\sin(t_+)+1}{\cos(t_+)}}{-1+\dfrac{\sin(t_+)+1}{\cos(t_+)}}=\tan\left(\frac{t_+}{2} \right).
			$$
			Hence,
			$$t_+=2\arctan\left(\dfrac{\sqrt{a^2-\varepsilon^2}}{\varepsilon} \right). $$
		\end{proof}
		
		\begin{lemma}\label{sinal}
			Consider
			$$t_-=\ln \left(\dfrac{\sqrt{a^2-\varepsilon^2}+\varepsilon}{\varepsilon-\sqrt{a^2-\varepsilon^2}}\right)\quad \mbox{and} \quad t_+=2 \arctan\left(\dfrac{\sqrt{a^2-\varepsilon^2}}{\varepsilon}\right).$$
			Then 
			\begin{itemize}
				\item[a)] $t_+>0$ if, and only if, $\varepsilon>0$;
				\item[b)] $t_+<0$ if, and only if, $\varepsilon<0$;
				\item[c)] $t_->0$ if, and only if, $ \frac{|a|}{\sqrt{2}}<\varepsilon < |a|$;
				\item[d)] $t_-<0$ if, and only if, $-|a|< \varepsilon< -\frac{|a|}{\sqrt{2}}$. 
			\end{itemize}
		\end{lemma}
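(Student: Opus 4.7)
The plan is to handle the two expressions separately, treating each as a short sign analysis. For $t_+$, I would use that $\arctan$ is odd and strictly increasing with $\arctan(0)=0$, so $\sgn(t_+)=\sgn(\sqrt{a^2-\varepsilon^2}/\varepsilon)$. Since $\sqrt{a^2-\varepsilon^2}\geq 0$ on the admissible range $|\varepsilon|\leq|a|$ and vanishes only at the endpoints, the sign of the argument, and hence of $t_+$, agrees with the sign of $\varepsilon$. This immediately yields the two equivalences $t_+>0\Leftrightarrow\varepsilon>0$ and $t_+<0\Leftrightarrow\varepsilon<0$.

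For $t_-$, I would abbreviate $s:=\sqrt{a^2-\varepsilon^2}\geq 0$ and first determine when the argument of the logarithm is positive. Splitting on the sign of $\varepsilon$: if $\varepsilon>0$, then $s+\varepsilon>0$, and positivity of the quotient forces $\varepsilon-s>0$, i.e.\ $\varepsilon>s$; squaring (both sides positive) gives $\varepsilon^2>a^2-\varepsilon^2$, equivalently $\varepsilon>|a|/\sqrt{2}$. Symmetrically, if $\varepsilon<0$, then $\varepsilon-s<0$, so we need $s+\varepsilon<0$, i.e.\ $-\varepsilon>s$, which squares to $\varepsilon<-|a|/\sqrt{2}$. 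This identifies the two ranges on which $t_-$ is real.

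Once the domain is fixed, I would decide the sign of $t_-$ by comparing the argument of $\ln$ with $1$. In the case $\varepsilon>0$ the denominator $\varepsilon-s$ is positive, so $(s+\varepsilon)/(\varepsilon-s)>1$ is equivalent to $s+\varepsilon>\varepsilon-s$, i.e.\ $s>0$, which holds exactly when $|\varepsilon|<|a|$; combined with the domain, this gives $t_->0\Leftrightarrow |a|/\sqrt{2}<\varepsilon<|a|$. In the case $\varepsilon<0$ the denominator is negative, so multiplying flips the inequality and $(s+\varepsilon)/(\varepsilon-s)<1$ becomes $s+\varepsilon>\varepsilon-s$, again $s>0$, giving $t_-<0\Leftrightarrow -|a|<\varepsilon<-|a|/\sqrt{2}$.

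No step here is a real obstacle; the lemma is essentially a domain and sign bookkeeping. The only place where care is required is the inequality flip when multiplying through by the negative quantity $\varepsilon-s$ in the case $\varepsilon<0$, where a sign error would exchange the conclusions $t_->0$ and $t_-<0$.
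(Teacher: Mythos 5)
Your proof is correct and follows essentially the same route as the paper: the sign of $t_+$ is read off from the monotonicity and oddness of $\arctan$, and the sign of $t_-$ comes from comparing the argument of the logarithm with $1$ after a case split on the sign of $\varepsilon-\sqrt{a^2-\varepsilon^2}$. Your reorganization (first pinning down where the logarithm is defined, then deciding the sign on each component of that domain) is if anything cleaner than the paper's presentation, but it is the same underlying computation.
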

		
		\begin{proof}
			We initially remember that $t_+,\; t_- \in \mathbb{R}$, thus, 
			$$\sqrt{a^2-\varepsilon^2}>0 \Leftrightarrow a^2> \varepsilon^2 \Leftrightarrow |\varepsilon| < |a|.$$
			
			\noindent
			a) Since $t_+=2\arctan\left(\dfrac{\sqrt{a^2-\varepsilon^2}}{\varepsilon}\right)$, we get that  
			$t_+ >0$ if, and only if, $\dfrac{\sqrt{a^2-\varepsilon^2}}{\varepsilon}>0 $. As $\sqrt{a^2-\varepsilon^2}>0$, it follows that $t_+>0$ if, and only if, $\varepsilon>0$.
			
			\noindent
			b) We have that $t_+=2\arctan\left(\dfrac{\sqrt{a^2-\varepsilon^2}}{\varepsilon}\right)$, hence 
			$t_+ <0$ if, and only if, $\dfrac{\sqrt{a^2-\varepsilon^2}}{\varepsilon}>0 $, As $\sqrt{a^2-\varepsilon^2}>0$, it follows that $t_+<0$ if, and only if, $\varepsilon<0$.
			
			\noindent
			c) Since $t_-=\ln \left(\dfrac{\sqrt{a^2-\varepsilon^2}+\varepsilon}{\varepsilon-\sqrt{a^2-\varepsilon^2}}\right)$, we have that 
			$t_->0$ if, and only if, $\dfrac{\sqrt{a^2-\varepsilon^2}+\varepsilon}{\varepsilon-\sqrt{a^2-\varepsilon^2}}>1.$
			We observe that if $\varepsilon - \sqrt{a^2-\varepsilon^2}>0$, then $$t_->0 \Leftrightarrow \sqrt{a^2-\varepsilon^2}+\varepsilon > \varepsilon - \sqrt{a^2-\varepsilon^2} \Leftrightarrow \sqrt{a^2-\varepsilon^2}>0.$$ 
			Moreover, 
			$$\varepsilon> \sqrt{a^2-\varepsilon^2}>0 \Leftrightarrow \varepsilon^2> a^2-\varepsilon^2 \Leftrightarrow \varepsilon>\frac{|a|}{\sqrt{2}}.$$
			Now, if $\varepsilon - \sqrt{a^2-\varepsilon^2}<0$, then $$t_->0 \Leftrightarrow \sqrt{a^2-\varepsilon^2}+\varepsilon < \varepsilon - \sqrt{a^2-\varepsilon^2} \Leftrightarrow \sqrt{a^2-\varepsilon^2}<0,$$
			which is an absurd since $\sqrt{a^2-\varepsilon}>0$. Hence, $t_->0$, if and only if, $ \frac{|a|}{\sqrt{2}}<\varepsilon < |a|$, because $\varepsilon>\frac{|a|}{\sqrt{2}}$ and $|\varepsilon|<|a| \Rightarrow \varepsilon<|a|$.  
			
			\noindent
			d) We have that 
			$t_-=\ln \left(\dfrac{\sqrt{a^2-\varepsilon^2}+\varepsilon}{\varepsilon-\sqrt{a^2-\varepsilon^2}}\right)$, thus, $t_-<0$ if, and only if,
			$$0<\frac{\sqrt{a^2-\varepsilon^2}+\varepsilon}{\varepsilon-\sqrt{a^2-\varepsilon^2}}<1.$$
			Initially, we focus in the second part of this inequality. If $\varepsilon -\sqrt{a^2-\varepsilon^2}>0$, then 
			$$\frac{\sqrt{a^2-\varepsilon^2}+\varepsilon}{\varepsilon-\sqrt{a^2-\varepsilon^2}}<1 \Leftrightarrow \sqrt{a^2-\varepsilon^2}+\varepsilon < \varepsilon-\sqrt{a^2-\varepsilon^2} \Leftrightarrow \sqrt{a^2-\varepsilon^2}<0,$$
			which is an absurd, because $\sqrt{a^2-\varepsilon^2}>0$. Now, if $\varepsilon - \sqrt{a^2-\varepsilon^2}<0$, then
			$$\frac{\sqrt{a^2-\varepsilon^2}+\varepsilon}{\varepsilon-\sqrt{a^2-\varepsilon^2}}<1 \Leftrightarrow \sqrt{a^2-\varepsilon^2}+\varepsilon > \varepsilon-\sqrt{a^2-\varepsilon^2} \Leftrightarrow \sqrt{a^2-\varepsilon^2}>0.$$
			Moreover, we remember that by the first part of the inequality, that is, 
			$$0<\frac{\sqrt{a^2-\varepsilon^2}+\varepsilon}{\varepsilon-\sqrt{a^2-\varepsilon^2}},$$ we have that 
			\begin{align*}
                \sqrt{a^2-\varepsilon^2}+\varepsilon<0 \Leftrightarrow\; & 0<\sqrt{a^2-\varepsilon^2}<-\varepsilon \Leftrightarrow a^2-\varepsilon^2<(-\varepsilon)^2\\ \Leftrightarrow\; & a^2<2\varepsilon^2  \Leftrightarrow -\varepsilon>\frac{|a|}{\sqrt{2}}.
                \end{align*}
			Thus, $t_-<0$ if, and only if, $-|a|<\varepsilon < -\frac{|a|}{\sqrt{2}}$, since $\varepsilon<-\frac{|a|}{\sqrt{2}}$ and $|\varepsilon|<|a| \Rightarrow \varepsilon>-|a|$. 
		\end{proof}	
		
		\begin{lemma}\label{norma}
			Consider 
			$$ t_+=2 \arctan\left(\dfrac{\sqrt{a^2-\varepsilon^2}}{\varepsilon}\right).$$ 
			Suppose that $\frac{|a|}{\sqrt{2}}<\varepsilon <|a|$ or  $-|a|< \varepsilon < -\frac{|a|}{\sqrt{2}}$.  Then $\|x_{\alpha_+}(t)\|^2>1$ for $0<t<t_+$.
		\end{lemma}	
		
		\begin{proof}
			We are going to analyze the squared norm of the solution $x_{\alpha_+}(t)$, that is, 
			$$\begin{array}{cll}
				\vspace{0.5em}\|x_{\alpha_+}(t)\|^2  &=& \displaystyle \frac{\left(\varepsilon \sin (t)-\sqrt{a^2-\varepsilon^2} \cos (t)\right) ^2+\left( \sqrt{a^2-\varepsilon^2} \sin (t)+\varepsilon \cos (t)\right)^2}{a^2}\\
				&& +\,\displaystyle\frac{\left( -\sqrt{a^2-\varepsilon^2} \sin (t)-\varepsilon \cos (t)+\varepsilon\right)^2}{a^2}.
			\end{array}$$
			
			The intersection between the plane $\alpha_+$, parameterized by $z=\frac{\varepsilon}{a}-y$, and the sphere $x^2+y^2+z^2=1$ is given by the curve
			$$\dfrac{x^2}{\dfrac{2a^2-\varepsilon^2}{2a^2}}+\dfrac{\left(y-\dfrac{\varepsilon}{2a}\right)^2}{\dfrac{2a^2-\varepsilon^2}{4a^2}}=1.$$
			Considering the parametrization
			$$x=\frac{\sqrt{2a^2-\varepsilon^2}}{\sqrt{2}a}\cos(t),$$
			$$y-\frac{\varepsilon}{2a}=\sqrt{\frac{2a^2-\varepsilon^2}{4a^2}}\sin(t)\Rightarrow y=\frac{\varepsilon}{2a}+\frac{\sqrt{2a^2-\varepsilon^2}}{4a^2}\sin(t),$$
			$$z=\frac{\varepsilon}{a}-\frac{\varepsilon}{2a}-\frac{\sqrt{2a^2-\varepsilon^2}}{4a^2}\sin(t)=\frac{\varepsilon}{2a}-\frac{\sqrt{2a^2-\varepsilon^2}}{4a^2}\sin(t),$$
			we obtain an ellipse, parameterized by
			$$\mathcal{E}_{\alpha_+}(t)=\left(\frac{\sqrt{2a^2-\varepsilon^2}}{\sqrt{2}a}\cos(t), \frac{\varepsilon}{2a}+\frac{\sqrt{2a^2-\varepsilon^2}}{2a}\sin(t), \frac{\varepsilon
			}{2a}-\frac{\sqrt{2a^2-\varepsilon^2}}{2a}\sin(t) \right).$$
			
			Now, we are going to determine the intersection between $\mathcal{E}_{\alpha_+}(t)$ and $x_{\alpha_+}(s)$.
			This intersection is given by the solution of the system
			$$
			\def\arraystretch{2}\left\{\begin{array}{ccc}
				2 \sqrt{a^2-\varepsilon^2} \cos (s)+\sqrt{4 a^2-2 \varepsilon^2} \cos (t)-2 \varepsilon \sin (s)&=&0,\\
				-2 \sqrt{a^2-\varepsilon^2} \sin (s)+\sqrt{2 a^2-\varepsilon^2} \sin (t)-2 \varepsilon \cos (s)+\varepsilon&=&0.
			\end{array}\right.
			$$
			From the second equation, we have
			$$\sin (t)= \frac{2 \sqrt{a^2-\varepsilon ^2} \sin (s)+2 \varepsilon  \cos (s)-\varepsilon }{\sqrt{2 a^2-\varepsilon ^2}},$$
			if
			\begin{equation}\label{des1}
				-1\leq\frac{2 \sqrt{a^2-\varepsilon ^2} \sin (s)+2 \varepsilon  \cos (s)-\varepsilon }{\sqrt{2 a^2-\varepsilon ^2}}\leq 1.
			\end{equation}
			So, considering this hypothesis, we have
                \begin{align*}
			\cos\left(\arcsin\left(\frac{2 \sqrt{a^2-\varepsilon ^2} \sin (s)+2 \varepsilon  \cos (s)-\varepsilon }{\sqrt{2 a^2-\varepsilon ^2}}\right)\right)=\sqrt{1-\frac{\left(2 \sqrt{a^2-\varepsilon ^2} \sin (s)+2 \varepsilon  \cos (s)-\varepsilon \right)^2}{2 a^2-\varepsilon ^2}}.
                \end{align*}
			Substituting in the first equation, 
			\begin{align*}
                \sqrt{4 a^2-2 \varepsilon ^2} \sqrt{1-\frac{\left(2 \sqrt{a^2-\varepsilon ^2} \sin (s)+2 \varepsilon  \cos (s)-\varepsilon \right)^2}{2 a^2-\varepsilon ^2}}+2 \sqrt{a^2-\varepsilon ^2} \cos (s)-2 \varepsilon  \sin (s)=0,\end{align*}
			we obtain that 
			$$\sin \left(\frac{s}{2}\right) \left(-2 \varepsilon  \sqrt{a^2-\varepsilon ^2} \sin (s)+\left(a^2-2 \varepsilon ^2\right) \cos (s)+a^2\right)=0.$$
			Hence,
			$$ \sin \left(\frac{s}{2}\right)=0 \quad \mbox{or} \quad -2 \varepsilon  \sqrt{a^2-\varepsilon ^2} \sin (s)+\left(a^2-2 \varepsilon ^2\right) \cos (s)+a^2=0.$$
			We have that $\sin \left(\frac{s}{2}\right)=0$ when $s=4\pi c_1$, where $c_1 \in \mathbb{Z}$. Moreover,
			$$-2 \varepsilon  \sqrt{a^2-\varepsilon ^2} \sin (s)+\left(a^2-2 \varepsilon ^2\right) \cos (s)+a^2=0$$
			implies	
			$$\cos(s)=\frac{2 \varepsilon ^2-a^2}{a^2},$$
			so that 
			$s= \arccos\left(\frac{2 \varepsilon ^2-a^2}{a^2}\right)+2 \pi  c_2$, where $c_2\in \mathbb{Z}$.
			Since $x_{\alpha_+}$ is $2\pi$-periodic, we can consider only $s_1=0$ and $s_2=\arccos\left(\frac{2 \varepsilon ^2-a^2}{a^2}\right)$ the times we have the intersection between $\mathcal{E}_{\alpha_+}$ and $x_{\alpha_+}$, where
			$x_{\alpha_+}(s_1)=P$ and $x_{\alpha_+}(s_2)=Q$. Furthermore, 
		  \begin{align*}
            &\frac{2 \sqrt{a^2-\varepsilon ^2} \sin (s_1)+2 \varepsilon  \cos (s_1)-\varepsilon }{\sqrt{2 a^2-\varepsilon ^2}}\\
            =&\frac{2 \sqrt{a^2-\varepsilon ^2} \sin (s_2)+2 \varepsilon  \cos (s_2)-\varepsilon }{\sqrt{2 a^2-\varepsilon ^2}}\\
            =&\frac{\varepsilon }{\sqrt{2 a^2-\varepsilon ^2}}
            \end{align*}
			satisfies the inequality (\ref{des1}) for $0<\varepsilon\leq |a|$ or  $-|a|\leq \varepsilon<0$.
			
			Now, remembering that 
			$$t_+=2 \arctan\left(\dfrac{\sqrt{a^2-\varepsilon^2}}{\varepsilon}\right)\Rightarrow \tan\left(\frac{t_+}{2}\right)=\dfrac{\sqrt{a^2-\varepsilon^2}}{\varepsilon},$$
			and considering $\frac{t_+}{2}$ as an angle of a right triangle, we get
			$$\cos\left(\frac{t_+}{2}\right)=\dfrac{\varepsilon}{a}\quad \mbox{and} \quad \sin\left(\frac{t_+}{2}\right)=\dfrac{\sqrt{a^2-\varepsilon^2}}{a}.$$ 
			Using trigonometric identities, we obtain
			$$\cos(t_+)=\cos^2\left(\frac{t_+}{2}\right)-\sin^2\left(\frac{t_+}{2}\right)=\dfrac{2\varepsilon^2-a^2}{a^2}.$$
			Hence, $t_+=s_2.$
			
			Finally, consider $$s_m=\frac{\arccos\left(\dfrac{2 \varepsilon ^2-a^2}{a^2}\right)}{2}.$$
			Substituting it in the expression of the norm, we have
			$$\|x_{\alpha_+}(s_m)\|^2=\frac{\varepsilon ^2}{a^2}-\frac{2 \varepsilon }{a}+2>1 \Leftrightarrow (\varepsilon-a)^2>0 \Leftrightarrow \varepsilon>a \quad \mbox{or} \quad \varepsilon<a.$$
			
			Therefore, since $x_{\alpha_+}(t)$ intersects $\mathcal{E}_{\alpha_+}$ in two points, $x_{\alpha_+}$ is continuous and $\|x_{\alpha_+}(s_m)\|^2>1$, for $\frac{|a|}{\sqrt{2}}<\varepsilon <|a|$ or  $-|a|< \varepsilon < -\frac{|a|}{\sqrt{2}}$, we have $\|x_{\alpha_+}(t)\|^2>1$ for $0<t<t_+$. 	
		\end{proof}

		\begin{lemma}\label{norma2}
			Consider 
			$$t_-=\ln \left(\dfrac{\sqrt{a^2-\varepsilon^2}+\varepsilon}{\varepsilon-\sqrt{a^2-\varepsilon^2}}\right).$$ 
			Suppose that $\frac{|a|}{\sqrt{2}}<\varepsilon <|a|$ or  $-|a|< \varepsilon < -\frac{|a|}{\sqrt{2}}$. Then $\|x_{\alpha_-}(t)\|^2<1$ for $0<t<t_-$.
		\end{lemma}	
		
		\begin{proof}
			We will analyze the squared norm of the solution $x_{\alpha_-}(t)$, that is,
			\[\begin{array}{lcl}
				\vspace{0.5em}\displaystyle \|x_{\alpha_-}(t)\|^2  &= & \displaystyle \frac{\left( \varepsilon \sinh (t)-\sqrt{a^2-\varepsilon^2} \cosh (t)\right)^2 }{a^2}+\, \displaystyle \frac{\left( \varepsilon \cosh (t)-\sqrt{a^2-\varepsilon^2} \sinh (t)\right)^2}{a^2}\\
				& & + \,\displaystyle \frac{\left( \sqrt{a^2-\varepsilon^2} \sinh (t)-\varepsilon \cosh (t)+\varepsilon\right)^2}{a^2}.     
			\end{array}\]
			
			Considering the same idea developed in the proof of Lemma \ref{norma}, we obtain that the intersection between the plane $\alpha_-$ and the sphere $S^2$, is the ellipse parameterized by
			$$\mathcal{E}_{\alpha_-}(t)\!\!=\!\!\left(\!\frac{\sqrt{2a^2-\varepsilon^2}}{\sqrt{2}a}\cos(t), \frac{\varepsilon}{2a}+\frac{\sqrt{2a^2-\varepsilon^2}}{2a}\sin(t), -\frac{\varepsilon
			}{2a}+\frac{\sqrt{2a^2-\varepsilon^2}}{2a}\sin(t) \!\!\right)\!.$$
			
			Now, we are going to study the intersection between $\mathcal{E}_{\alpha_-}(t)$ and $x_{\alpha_-}(s)$. Equaling them, we obtain the system
			$$
			\def\arraystretch{2}\left\{\begin{array}{ccc}
				2 \sqrt{a^2-\varepsilon^2} \cosh (s)+\sqrt{4 a^2-2 \varepsilon^2} \cos (t)-2 \varepsilon \sinh (s) & = & 0,\\
				2 \sqrt{a^2-\varepsilon^2} \sinh (s)+\sqrt{2 a^2-\varepsilon^2} \sin (t)-2 \varepsilon \cosh (s)+\varepsilon&=&0.
			\end{array}\right.
			$$
			From the second equation, we get 
			$$\sin (t)=\frac{-2 \sqrt{a^2-\varepsilon^2} \sinh (s)+2 \varepsilon \cosh (s)-\varepsilon^2}{\sqrt{2 a^2-\varepsilon^2}},$$
			if
			\begin{equation}\label{des2}
				-1\leq\frac{-2 \sqrt{a^2-\varepsilon^2} \sinh (s)+2 \varepsilon \cosh (s)-\varepsilon}{\sqrt{2 a^2-\varepsilon^2}} \leq 1.
			\end{equation}	
			So, considering this hypothesis, we have
			\begin{align*}
			    \cos\left(\arcsin\left(\frac{2 \varepsilon \cosh (s)-2 \sqrt{a^2-\varepsilon^2} \sinh (s)-\varepsilon}{\sqrt{2 a^2-\varepsilon^2}}\right)\right)=\sqrt{1-\frac{\left(-2 \sqrt{a^2-\varepsilon^2} \sinh (s)+2 \varepsilon \cosh (s)-\varepsilon\right)^2}{2 a^2-\varepsilon^2}}.
                \end{align*}
			Substituting in the first equation,
			\begin{align*}
                \sqrt{4 a^2-2 \varepsilon^2}\sqrt{1-\frac{\left(2 \varepsilon \cosh(s)-2 \sqrt{a^2-\varepsilon^2} \sinh (s)-       \varepsilon\right)^2}{2 a^2-\varepsilon^2}}= 2\varepsilon \sinh (s)-2 \sqrt{a^2-\varepsilon^2} \cosh (s),
                \end{align*}
			we get $s_1=0$ and 
			\begin{align*}
				s_2 =&\displaystyle \ln \left[ \left( 2 \varepsilon \sqrt{a^2-\varepsilon^2}-3 a^2 +4 \varepsilon^2 \right.\right. +\displaystyle 2 \left(27 a^6-286 \varepsilon^5 \sqrt{a^2-\varepsilon^2}+2 \varepsilon^6\right.\\
				&\left. +a^2 \varepsilon^3 \left(253 \sqrt{a^2-\varepsilon^2}+141 \varepsilon\right)-18 a^4 \varepsilon \left(3 \sqrt{a^2-\varepsilon^2}+7 \varepsilon\right)+9 \sqrt{3}\right.\\
				& +\!\!\displaystyle \left. \sqrt{\!-\!\left(\left(17 \varepsilon^2-9 a^2\right) \left(a^2 \varepsilon-2 \varepsilon^3\right)^2 \left(-a^4+4 a^2 \varepsilon \left(\sqrt{a^2-\varepsilon^2}-d\right)+4 \varepsilon^4\!\right)\!\right)\!}\!\right)^{\frac{1}{3}}\\
				& +\displaystyle \left(18 a^4-2 a^2 \varepsilon \left(12 \sqrt{a^2-\varepsilon^2}+23 \varepsilon\right)+4 \varepsilon^3 \left(11 \sqrt{a^2-\varepsilon^2}+6 \varepsilon\right)\right)\\
				&\div\displaystyle \left(27 a^6-286 \varepsilon^5 \sqrt{a^2-\varepsilon^2}+a^2 \varepsilon^3 \left(253 \sqrt{a^2-\varepsilon^2}+141 \varepsilon\right)+2 \varepsilon^6\right.\\
				&+\!\displaystyle \left.\left.9 \sqrt{3} \sqrt{\!-\varepsilon^2 \left(9 a^2-17 \varepsilon^2\right) \left(a^2-2 \varepsilon^2\right)^2 \left(a^4+4 a^2 \varepsilon \left(\varepsilon-\sqrt{a^2-\varepsilon^2}\right)-4 \varepsilon^4\right)\!}\right.\right.\\
				&\left.\left.-18 a^4 \varepsilon \left(3 \sqrt{a^2-\varepsilon^2}+7 \varepsilon\right)\right)^{\frac{1}{3}}\right) \div \displaystyle \left. 9 \left(a^2-2 \varepsilon \sqrt{a^2-\varepsilon^2}\right)\right].
			\end{align*}
			It is possible to notice that $t_-=s_2$. Moreover, 
		\begin{align*}
        &\frac{2 \varepsilon \cosh (s_1)-2 \sqrt{a^2-\varepsilon^2} \sinh (s_1)-\varepsilon}{\sqrt{2 a^2-\varepsilon^2}}\\
        =&\frac{2 \varepsilon \cosh (s_2)-2 \sqrt{a^2-\varepsilon^2} \sinh (s_2)-\varepsilon}{\sqrt{2 a^2-\varepsilon^2}}\\
        =&\frac{\varepsilon}{\sqrt{2 a^2-\varepsilon^2}}
        \end{align*}
			satisfies the inequality (\ref{des2}) for $0<\varepsilon\leq |a|$ or  $-|a|\leq \varepsilon<0$.
			
			Finally, consider
			$$s_m=\frac{1}{2}\ln\left(\dfrac{\sqrt{a^2-\varepsilon^2}+\varepsilon}{\varepsilon-\sqrt{a^2-\varepsilon^2}}\right).$$
			Substituting it in the expression of the norm, we get
			$$\|x_{\alpha_-}(s_m)\|^2=\frac{\varepsilon \left(2 \sqrt{a^2-\varepsilon^2} \sqrt{\frac{\sqrt{a^2-\varepsilon^2}+\varepsilon}{\varepsilon-\sqrt{a^2-\varepsilon^2}}}+\varepsilon \left(5-2 \sqrt{\frac{\sqrt{a^2-\varepsilon^2}+\varepsilon}{\varepsilon-\sqrt{a^2-\varepsilon^2}}}\right)\right)}{a^2}-2<1$$
			which implies
			$$ a<0 \quad \mbox{and}\quad \left(a< \varepsilon\leq \frac{a}{\sqrt{2}} \quad \mbox{or} \quad -\frac{a}{\sqrt{2}}< \varepsilon<-a\right)$$
			or
			$$ a>0\quad \mbox{and} \quad \left(-a< \varepsilon\leq -\frac{a}{\sqrt{2}}\quad \mbox{or}\quad \frac{a}{\sqrt{2}}< \varepsilon<a\right).$$
			
			Therefore, since $x_{\alpha_-}(t)$ intersects $\mathcal{E}_{\alpha_-}$ in two points, $x_{\alpha_-}$ is continuous and $\|x_{\alpha_-}(s_m)\|^2<1$, for $\frac{|a|}{\sqrt{2}}<\varepsilon <|a|$ or  $-|a|< \varepsilon < -\frac{|a|}{\sqrt{2}}$, we have $\|x_{\alpha_-}(t)\|^2<1$ for $0<t<t_-$. 
		\end{proof}
		
		From the previous lemmas, we are able to present a proof for Theorem \ref{teo1}, as follows.

		\begin{proof}[Proof of Theorem 1]
			From Lemma \ref{tangencia}, we obtain the tangency points $P$ and $Q$, which belongs to the solutions $x_{\alpha_+}(t)$ and $x_{\alpha_-}(t)$ with initial condition $P$. Moreover, by Lemma \ref{tempos}, we get the times $t_+$ and $t_-$, in which $x_{\alpha_+}(t)$ and $x_{\alpha_-}(t)$ pass through the point $Q$, respectively. Even more, by Lemma \ref{sinal}, if $\frac{|a|}{\sqrt{2}}<\varepsilon<|a|$, we have $t_->0$ and $t_+>0$ and if $-|a|<\varepsilon<-\frac{|a|}{\sqrt{2}}$, we have $t_-<0$ and $t_+<0$. 
			Finally, by Lemma \ref{norma}, we obtain that the solution $x_{\alpha_+}(t)$ restricted to the points $P$ and $Q$ is visible and, by Lemma \ref{norma2}, we get that the solution $x_{\alpha_-}(t)$ restricted to the points $P$ and $Q$ is visible. This is valid considering the hypotheses of the lemmas. Therefore, the third order differential equation admits a pseudo-cycle when 
			$$\frac{|a|}{\sqrt{2}}<\varepsilon<|a| \quad \mbox{or}\quad -|a|<\varepsilon<-\frac{|a|}{\sqrt{2}}.$$
		\end{proof}
		
		In Figure \ref{figpseudo}, we illustrate the pseudo-cycle of the piecewise smooth differential system $Z_{X_-X_+}$ with $a=5$ and $\varepsilon=4$, that is, $X_-(x,y,z)=(y,-5y+x+5z+4,x)$ and $X_+(x,y,z)=(y,-5y-x-5z+4,x)$.
		
		\begin{figure}[h]
			\begin{subfigure}{0.33\textwidth}
				\centering
				\includegraphics[scale=0.4]{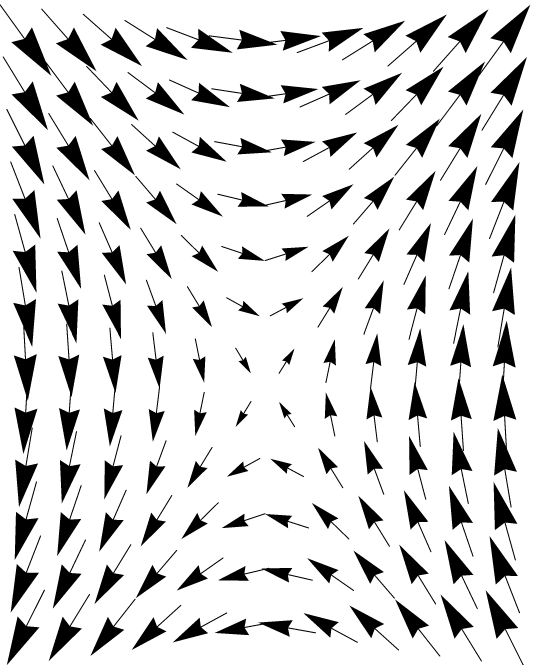}
				\caption{$X_-$ in the plane $\alpha_-$}
				\label{sela3D}
			\end{subfigure}%
			\begin{subfigure}{0.33\textwidth}
				\centering
				\includegraphics[scale=0.45]{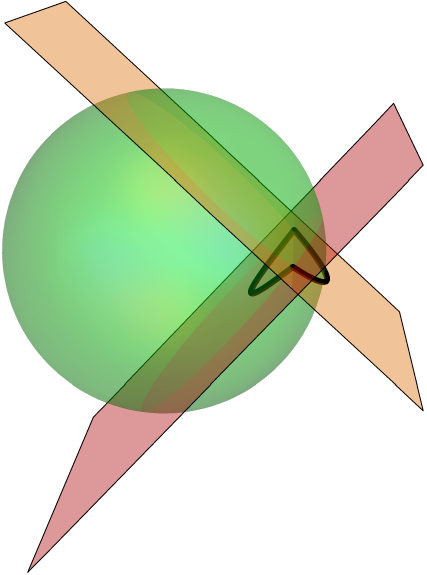}
				\caption{Pseudo-trajectory}
			\end{subfigure}%
			\begin{subfigure}{0.33\textwidth}
				\centering
				\includegraphics[scale=0.4]{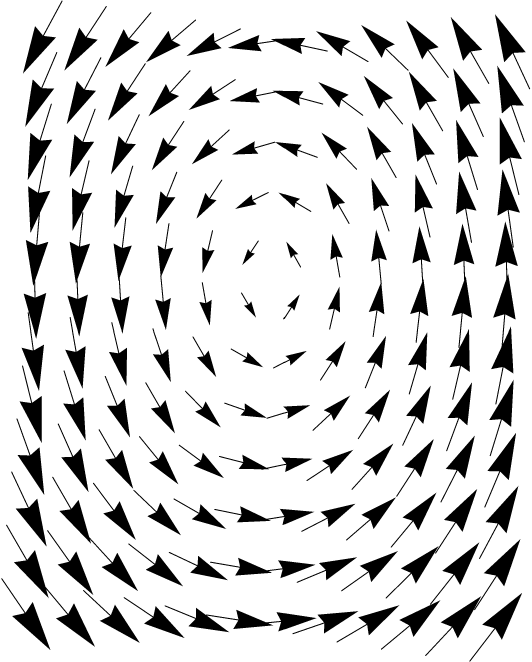}
				\caption{$X_+$ in the plane $\alpha_+$}
				\label{centro3D}
			\end{subfigure}%
			\caption{Vector field $Z_{X_-X_+}$ with $a=5$ and $\varepsilon=4$. The heart-shaped curve in (b) is the pseudo-cycle.}
			\label{figpseudo}
		\end{figure} 
		
		\section{Proof of Theorem 2}\label{prteo2}

        In this section, we study the perturbation of a smooth vector field. The averaging theory is employed to show a result concerning the number of limit cycles.
        
		We are considering $\varepsilon \neq 0$ sufficiently small, $b>0$ and $F(z,\dot{z},\ddot{z})$ a polynomial of degree $n$ in the differential equation (\ref{maineq}).
		
		Here, we also present some lemmas that will support us in the proof of Theorem \ref{teo2}, which will be given in two parts.

		\begin{proof}[Proof of Theorem \ref{teo2} for $\varepsilon = 0$]
			Analyzing the unperturbed system
			$$\left(\begin{array}{c}
				\dot{x}\\
				\dot{y}\\
				\dot{z}
			\end{array} \right)=
			\left(\begin{array}{ccc}
				0 & 1 & 0\\
				-b & -a & -ab\\
				1 & 0 & 0
			\end{array} \right) \left(\begin{array}{c}
				x\\
				y\\
				z
			\end{array} \right),$$
			that admits $S=(0,0,0)$ as singularity, we obtain that the characteristic polynomial of the matrix 
			$$\left(\begin{array}{ccc}
				0 & 1 & 0\\
				-b & -a & -ab\\
				1 & 0 & 0
			\end{array} \right)$$
			is $p(\lambda)=-\lambda^3-a\lambda^2-b\lambda-ab$. Thus the eigenvalues are given by $\lambda_1=-a$, $\lambda_2=\sqrt{b} i$ and $\lambda_3=-\sqrt{b} i$.
			Determining the corresponding eigenvectors, we obtain $u_1=(-a,a^2,1)$, $u_2=(\sqrt{b}i,-b,1)=(0,-b,1)+i(\sqrt{b},0,0)$ and $u_3=(-\sqrt{b}i,-b,1)=(0,-b,1)-i(\sqrt{b},0,0)$, relative to the eigenvalues $\lambda_1$, $\lambda_2$ and $\lambda_3$, respectively.  
			
			From these information, we note that the system $(\ref{sisper})|_{\varepsilon=0}$ behaves like a center in the plane spanned by the vectors $w_1=(0,-b,1)$ and $w_2=(\sqrt{b},0,0)$ with origin at the point $\left(0,0,0\right)$. Hence, we have 
			$$w_1\times w_2=\left| \begin{array}{ccc}
				i & j & k\\
				0 & -b & 1\\
				\sqrt{b} & 0 & 0
			\end{array}\right| = \sqrt{b}j+b\sqrt{b}k= (0,\sqrt{b},b\sqrt{b}),$$
			thus, the plane is given by
			$$ \left(x,y,z\right)\cdot (0,\sqrt{b},b\sqrt{b})=0\Rightarrow y+bz=0, $$
			that is, 
			$$ \alpha=\left\lbrace (x,y,z) \in \mathbb{R}^3\; |\; y+bz=0 \right\rbrace .$$
			
			We note that the solution of the system $(\ref{sisper})|_{\varepsilon=0}$ with initial condition $\left(x_0,-bz_0,z_0\right)$ is
		  \begin{align*}
            x_{\alpha}(t)=&\left( -z_0\sqrt{b} \sin(\sqrt{b}t)+x_0\cos(\sqrt{b}t),-z_0b\cos(\sqrt{b}t)-x_0\sqrt{b}\sin(\sqrt{b}t),\right.\\
            &\left.z_0\cos (\sqrt{b}t)+\frac{x_0}{\sqrt{b}}\sin(\sqrt{b}t)\right),
            \end{align*}
			which belongs to $\alpha$, because
			$$-z_0b\cos(\sqrt{b}t)-x_0\sqrt{b}\sin(\sqrt{b}t)+b\left(z_0\cos (\sqrt{b}t)+\frac{x_0}{\sqrt{b}}\sin(\sqrt{b}t) \right)  =0.$$
			Therefore, $\alpha$ is invariant with respect to the unperturbed system. Moreover, the orbits admit period $\frac{2\pi}{\sqrt{b}}$.
		\end{proof}

		\begin{lemma}\label{bifurcate}
			Assume $b>0$. For $\varepsilon\neq 0$ sufficiently small and for each simply root $r_0$ of the polynomial
		\begin{align*}
	&\int_0^{\frac{2\pi}{\sqrt{b}}} \left(\sqrt{b} \cos         \left(\sqrt{b} \theta \right)-a \sin \left(\sqrt{b}         \theta \right)\right)\\
        &F\left(\sqrt{b} r \sin \left(\sqrt{b} \theta \right),-b r \cos\left(\sqrt{b} \theta \right),r \cos \left(\sqrt{b}\theta\right)\right) \, d\theta,
        \end{align*}
			the differential system (\ref{sisper}) admits a limit cycle bifurcating from a periodic orbit of the plane $\alpha$ of the unperturbed system $(\ref{sisper})|_{\varepsilon=0}$.  
		\end{lemma}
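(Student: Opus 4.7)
The plan is to invoke Theorem \ref{averaging} with the invariant plane $\alpha$ furnishing the two-dimensional isochronous family of $T$-periodic orbits of $(\ref{sisper})|_{\varepsilon=0}$, where $T = 2\pi/\sqrt{b}$. First I would parametrize this family by a single parameter $r>0$ corresponding to initial conditions $z_r$ on a cross-section of $\alpha$, using the explicit periodic solution $x_{\alpha}(t)$ already written down in the preceding lemma. The unperturbed system restricted to $\alpha$ is a harmonic oscillator of frequency $\sqrt{b}$, and the third eigendirection of the coefficient matrix $A$ of $(\ref{sisper})|_{\varepsilon=0}$ (the direction of $(-a,a^2,1)$, with eigenvalue $-a$) provides the natural transverse direction that carries the non-trivial monodromy.

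Next I would construct a fundamental matrix $M_{z_r}(t)$ of the linearization $\dot y = Ay$ along each periodic orbit. Since $A$ is constant, $M_{z_r}(t)$ is just the matrix exponential $e^{At}$ (times an arbitrary constant invertible matrix $C$), whose columns can be read off from the eigenstructure of $A$ computed in the preceding lemma. The monodromy $M_{z_r}(T)$ acts as the identity on $\alpha$ (since $\sqrt{b}\cdot T = 2\pi$) and multiplies the transverse direction by $e^{-aT}$; after a suitable choice of $C$, the matrix $M_{z_r}^{-1}(0)-M_{z_r}^{-1}(T)$ will take the block form required by condition (b) of Theorem \ref{averaging}, with the relevant block $\Delta_{\alpha}$ a nonzero multiple of $1-e^{aT}$.

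Then I would compute the averaging function $\mathcal{F}(r)=\xi\bigl(\tfrac{1}{T}\int_0^T M_{z_r}(t)\,F_1(x(t,z_r))\,dt\bigr)$, where $F_1=(0,F,0)^{\top}$ carries the perturbation in $(\ref{sisper})$. Only the second column of $M_{z_r}(t)$ contributes, and after projecting by $\xi$ and unwinding the change of basis $C$, the integrand reduces to $F(x(t,z_r))$ multiplied by a linear combination of $\cos(\sqrt{b}t)$ and $\sin(\sqrt{b}t)$. Substituting the explicit form of $x(t,z_r)$ from the preceding lemma, this combination should collapse to $\sqrt{b}\cos(\sqrt{b}t)-a\sin(\sqrt{b}t)$; the scalar prefactor $1/(2\pi(a^2+b))$ arises from combining $1/T=\sqrt{b}/(2\pi)$ with the factor $1/(\sqrt{b}(a^2+b))$ inherited from $C^{-1}$, which in turn comes from $\det(P)=\sqrt{b}(a^2+b)$ for the eigenbasis matrix $P$ already computed in the earlier lemma. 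Hence $\mathcal{F}(r)$ coincides with the integral displayed in the lemma.

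Finally, at any simple root $r_0$ of this integral we have $\mathcal{F}(r_0)=0$ and $\mathcal{F}'(r_0)\neq 0$, so Theorem \ref{averaging} yields a $T$-periodic solution $\varphi(t,\varepsilon)$ of $(\ref{sisper})$ with $\varphi(t,\varepsilon)\to x(t,z_{r_0})$ as $\varepsilon\to 0$; this is the desired limit cycle bifurcating from the orbit of ``radius'' $r_0$ in $\alpha$. The hardest step will be the explicit algebraic identification of $\mathcal{F}(r)$ with the stated integral: one must carry out the multiplication $M_{z_r}(t)F_1(x(t,z_r))$, track the projection $\xi$ through the constant change of basis $C$ used to put the monodromy into block form, and verify that the prefactor collapses exactly to $1/(2\pi(a^2+b))$ without residual dependence on $a$ or $b$ beyond what appears in the statement.
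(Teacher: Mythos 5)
Your overall strategy (invoke Theorem \ref{averaging}, parametrize the periodic orbits of the plane $\alpha$ by a radius $r$, build the fundamental matrix from the eigenstructure of the constant coefficient matrix, and identify the averaged function with the stated integral) is the right one, but there is a genuine gap in the step where you verify hypothesis (b) directly on the three-dimensional autonomous system. With $k=1$ and $n=3$ the block $\Delta_{\alpha}$ is a $2\times 2$ matrix, not the scalar multiple of $1-e^{aT}$ you describe. Since the coefficient matrix $A$ is constant, every fundamental matrix has the form $M_{z_r}(t)=e^{At}C$, so $M_{z_r}^{-1}(0)-M_{z_r}^{-1}(T)=C^{-1}\bigl(I-e^{-AT}\bigr)$. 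Because $\sqrt{b}\,T=2\pi$, the monodromy $e^{AT}$ fixes the whole plane $\alpha$ pointwise: both the radial direction \emph{and} the direction of the flow along each periodic orbit carry the trivial multiplier $1$. Hence $I-e^{-AT}$ has rank one, every $2\times 2$ minor of $C^{-1}\bigl(I-e^{-AT}\bigr)$ vanishes, and $\det(\Delta_{\alpha})=0$ for \emph{any} choice of $C$. Hypothesis (b) therefore cannot be satisfied in the setting you propose; the obstruction is precisely the extra unit Floquet multiplier coming from the phase direction along each closed orbit.

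The paper removes this degeneracy before averaging: it diagonalizes the linear part via the eigenbasis $B$, passes to cylindrical coordinates $X=r\cos(\sqrt{b}\theta)$, $Y=r\sin(\sqrt{b}\theta)$, and then takes $\theta$ as the new independent variable. This collapses the problem to a two-dimensional non-autonomous system in $(r,Z)$ whose unperturbed solutions through $(r,0)$ are constants, so that $k=1$, $n-k=1$, the fundamental matrix is $\mathrm{diag}(1,e^{a\theta})$, and $\Delta_{\alpha}=1-e^{-2\pi a/\sqrt{b}}\neq 0$. Only after this reduction does the computation of $\mathcal{F}(r)$ you outline go through and produce the displayed integral with the prefactor $1/(2\pi(a^2+b))$. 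Your proposal needs this dimension reduction (or an equivalent device eliminating the phase direction, such as a Poincar\'e-section formulation) inserted before Theorem \ref{averaging} is applied.
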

		
		\begin{proof}
			Consider 
			$$B=\left(
			\begin{array}{ccc}
				0 & \sqrt{b} & -a \\
				-b & 0 & a^2 \\
				1 & 0 & 1 \\
			\end{array}
			\right) \quad \mbox{and} \quad J=\left(
			\begin{array}{ccc}
				0 & \sqrt{b} & 0 \\
				-\sqrt{b} & 0 & 0 \\
				0 & 0 & -a \\
			\end{array}
			\right)$$
			the matrix of the eigenvectors and the Jordan matrix of
			$$
			A=\left(\begin{array}{ccc}
				0 & 1 & 0\\
				-b & -a & -ab\\
				1 & 0 & 0
			\end{array} \right),
			$$
			respectively. We have that 
			$$B^{-1}=
			\left(
			\def\arraystretch{2.5}\begin{array}{ccc}
				0 & -\dfrac{\sqrt{b}}{a^2 \sqrt{b}+b^{3/2}} & \dfrac{a^2 \sqrt{b}}{a^2 \sqrt{b}+b^{3/2}} \\
				\dfrac{a^2+b}{a^2 \sqrt{b}+b^{3/2}} & \dfrac{a}{a^2 \sqrt{b}+b^{3/2}} & \dfrac{a b}{a^2 \sqrt{b}+b^{3/2}} \\
				0 & \dfrac{\sqrt{b}}{a^2 \sqrt{b}+b^{3/2}} & \dfrac{b^{3/2}}{a^2 \sqrt{b}+b^{3/2}} \\
			\end{array}
			\right)
			$$
			and $BJB^{-1}=A$. Consider the change of coordinates $$(X,Y,Z)^{T}=B^{-1}(x,y,z)^{T},$$ that is, 
			$x = \sqrt{b} Y-a Z,\; y =a^2 Z-b X,\; z =X + Z$. Thus, we get
			\begin{equation}
				\left\{\def\arraystretch{3}\begin{array}{lll}
					\dot{X}&=&\sqrt{b} Y-\dfrac{\varepsilon  F\left(\sqrt{b} Y-a Z,a^2 Z-b X,X+Z\right)}{a^2+b},\\
					\dot{Y}&=&-\sqrt{b} X+\dfrac{a \varepsilon  F\left(\sqrt{b} Y-a Z,a^2 Z-b X,X+Z\right)}{\sqrt{b}(a^2+b)},\\
					\dot{Z}&=&-a Z+\dfrac{\varepsilon  F\left(\sqrt{b} Y-a Z,a^2 Z-b X,X+Z\right)}{a^2+b}. 
				\end{array}\right.
			\end{equation}
			
			Next, we are going to consider the change of cylindrical coordinates, that is, $X=r\cos(\sqrt{b}\theta),\; Y=r\sin(\sqrt{b}\theta),\; Z=Z$. Thus, we obtain 
			\begin{equation}\label{spercil}\left\{\begin{array}{lll}
					\dot{r}&=& \dfrac{\varepsilon  \left(a \sin \left(\sqrt{b} \theta \right)-\sqrt{b} \cos \left(\sqrt{b} \theta \right)\right) }{\sqrt{b} \left(a^2+b\right)}\\[0.3cm] %\nonumber
					& & F\left(\sqrt{b} r \sin \left(\sqrt{b} \theta \right)-a Z,a^2 Z-b r \cos \left(\sqrt{b} \theta \right),r \cos \left(\sqrt{b} \theta \right)+Z\right),\\[0.3cm] %\nonumber
					\dot{\theta}&=&-1+\dfrac{\varepsilon  \left(a \cos \left(\sqrt{b} \theta \right)+\sqrt{b} \sin \left(\sqrt{b} \theta \right)\right) }{b r \left(a^2+b\right)}\\[0.3cm] %\nonumber
					& &  F\left(\sqrt{b} r \sin \left(\sqrt{b} \theta \right)-a Z,a^2 Z-b r \cos \left(\sqrt{b} \theta \right),r \cos \left(\sqrt{b} \theta \right)+Z\right),\\[0.3cm] %\nonumber
					\dot{Z}&=&\dfrac{\varepsilon  F\left(\sqrt{b} r \sin \left(\sqrt{b} \theta \right)-a Z,a^2 Z-b r \cos \left(\sqrt{b} \theta \right),r \cos \left(\sqrt{b} \theta \right)+Z\right)}{a^2+b}\\
                    & &-a Z. %\nonumber
				\end{array}\right.\end{equation}
			Changing the independent variable $t$ of the system \eqref{spercil} for the variable $\theta$, we obtain the equivalent 2-dimensional system 
			\begin{equation}\left\{\label{sfinal}\begin{array}{lll}
					r'&=&\dfrac{\varepsilon  \left(\sqrt{b} \cos \left(\sqrt{b} \theta \right)-a \sin \left(\sqrt{b} \theta \right)\right)}{\sqrt{b} \left(a^2+b\right)}\\[0.3cm]
					& & F\left(\sqrt{b} r \sin \left(\sqrt{b} \theta \right)-a Z,a^2 Z-b r \cos \left(\sqrt{b} \theta \right),r \cos \left(\sqrt{b} \theta \right)+Z\right),\\[0.3cm]
					Z'&=&\dfrac{\varepsilon  \left(a^2 Z \cos \left(\sqrt{b} \theta \right)+a \sqrt{b} Z \sin \left(\sqrt{b} \theta \right)-b r\right)}{b r \left(a^2+b\right)}\\[0.3cm]
					& & F\left(\sqrt{b} r \sin \left(\sqrt{b} \theta \right)-a Z,a^2 Z-b r \cos \left(\sqrt{b} \theta \right),r \cos \left(\sqrt{b} \theta \right)+Z\right)\\
                    & &+a Z,
				\end{array}\right.\end{equation}
			
			\noindent where the prime ($'$) denotes the derivative with respect to $\theta$.
			
			Thus, if we use the notation ${\bf x}=(r,Z)$, we have that the system (\ref{sfinal}) can be written in the form
			$${\bf x}'(\theta)=F_0(\theta,{\bf x})+\varepsilon F_1(\theta,{\bf x})+\varepsilon^{2} F_2(\theta,{\bf x},\varepsilon),$$
			with $F_0,F_1:\mathbb{R}\times \Omega \rightarrow \Omega$ e $F_2:\mathbb{R}\times \Omega \times (-\varepsilon_0,\varepsilon_0) \rightarrow \Omega$, where $\Omega=\{(r,Z)|\; Z\in \mathbb{R},\; r>0\}$, $F_0(\theta,{\bf x})=\left(0,a Z\right)$ and
			\begin{align*}
				F_1(\theta,{\bf x})=&\left( \dfrac{\left(\sqrt{b} \cos \left(\sqrt{b} \theta \right)-a \sin \left(\sqrt{b} \theta \right)\right)}{\sqrt{b} \left(a^2+b\right)}\right. \\[0.2cm]
				& F\left(\sqrt{b} r \sin \left(\sqrt{b} \theta \right)-a Z,a^2 Z-b r \cos \left(\sqrt{b} \theta \right),r \cos \left(\sqrt{b} \theta \right)+Z\right),\\[0.2cm]
				& \dfrac{\left(a^2 Z \cos \left(\sqrt{b} \theta \right)+a \sqrt{b} Z \sin \left(\sqrt{b} \theta \right)-b r\right) }{b r \left(a^2+b\right)}\\[0.2cm]
				&\left. F\left(\sqrt{b} r \sin \left(\sqrt{b} \theta \right)-a Z,a^2 Z-b r \cos \left(\sqrt{b} \theta \right),r \cos \left(\sqrt{b} \theta \right)+Z\right)\right) . \\
			\end{align*}
			
			Consider the subset 
			$\mathcal{Z}=\{z_r=(r,0)|\; r>0\}$
			of $\Omega$. The general solution of the system $(\ref{sfinal})|_{\varepsilon=0}$ is ${\bf x}=(c_2,c_1e^{at})$, thus, the solution passing through the point $z_r$ is ${\bf x}(\theta, z_r)=(r,0)$, which is constant, hence $\frac{2\pi}{\sqrt{b}}$-periodic in $\theta$.  
			
			We have that the linearization of the unperturbed system $(\ref{sfinal})|_{\varepsilon=0}$ along the solutions of $\mathcal{Z}$ is 
			\begin{equation}\label{matrix}
				\left( \begin{array}{c}
					r'\\
					Z'
				\end{array}\right)=
				\left( \begin{array}{cc}
					0 & 0\\
					0 & a
				\end{array}\right)
				\left( \begin{array}{c}
					r\\
					Z
				\end{array}\right).
			\end{equation}	
			The fundamental matrix of (\ref{matrix}) and its inverse are 
			$$M_{z_r}(\theta)=\left(
			\begin{array}{cc}
				1 & 0 \\
				0 & e^{a\theta} \\
			\end{array}
			\right) 
			\quad \mbox{and} \quad  
			M_{z_r}^{-1}(\theta)\left(
			\begin{array}{cc}
				1 & 0 \\
				0 & e^{-a \theta} \\
			\end{array}
			\right).$$
			Thus,
			$$M_{z_r}^{-1}(0)-M_{z_r}^{-1}\left(\frac{2\pi}{\sqrt{b}}\right)=\left(
			\begin{array}{cc}
				0 & 0 \\
				0 & 1-e^{-\frac{2 \pi  a}{\sqrt{b}}} \\
			\end{array}
			\right).
			$$
			Taking $V=(r_1,r_2)$ a subset of $\mathbb{R}$ and $\beta : [r_1,r_2]\rightarrow \mathbb{R}$ the constant function $0$, we get that the system (\ref{sfinal}) satisfies the conditions (a) and (b) of Theorem \ref{averaging}. 
			
			Now, according to Theorem \ref{averaging}, we must study the zeros in $V$ of the equation $\mathcal{F}(r)=0$, where
			$$\mathcal{F}(r)=\xi \left(\dfrac{1}{2\pi}\int_{0}^{\frac{2\pi}{\sqrt{b}}} M_{z_r}^{-1}(\theta)F_1(\theta,{\bf x}(\theta,z_r))\, d\theta \right), $$
			with
			\begin{align*}
				M_{z_r}^{-1}(\theta)F_1(\theta,{\bf x}(\theta,z_r))=&
				\left( \dfrac{\left(\sqrt{b} \cos \left(\sqrt{b} \theta \right)-a \sin \left(\sqrt{b} \theta \right)\right)}{\sqrt{b} \left(a^2+b\right)}\right. \\[0.2cm]
				& F\left(\sqrt{b} r \sin \left(\sqrt{b} \theta \right),-b r \cos \left(\sqrt{b} \theta \right),r \cos \left(\sqrt{b} \theta \right)\right),\\[0.2cm]
				& -F\left(\sqrt{b} r \sin \left(\sqrt{b} \theta \right),-b r \cos \left(\sqrt{b} \theta \right),r \cos \left(\sqrt{b} \theta \right)\right)\\
                &\left.\frac{e^{-a \theta }}{a^2+b}\right). 
			\end{align*}
			Hence, 
			\begin{align*}
				\mathcal{F}(r)&=\dfrac{1}{2\pi (a^2+b)}\int_0^{\frac{2\pi}{\sqrt{b}}} \left(\sqrt{b} \cos \left(\sqrt{b} \theta \right)-a \sin \left(\sqrt{b} \theta \right)\right)\\
				&\quad F\left(\sqrt{b} r \sin \left(\sqrt{b} \theta \right),-b r \cos \left(\sqrt{b} \theta \right),r \cos \left(\sqrt{b} \theta \right)\right) \, d\theta.
			\end{align*}
			Therefore, we conclude that the proof of the result follows from Theorem \ref{averaging}.
		\end{proof}
		
		\begin{lemma}\label{intzero}
			Consider
			$$\int_{0}^{\frac{2\pi}{\sqrt{b}}}\sin^m\left(\sqrt{b}\theta\right)\cos^n\left(\sqrt{b}\theta\right)\,d\theta,$$
			with $b>0$. When $n$ or $m$ is odd, the integral is zero.  
		\end{lemma}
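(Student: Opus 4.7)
My plan is to reduce the integral to the standard form $\int_0^{2\pi}\sin^m u\cos^n u\,du$ via the substitution $u=\sqrt{b}\,\theta$, which gives $du = \sqrt{b}\,d\theta$ and transforms the interval $[0,2\pi/\sqrt{b}]$ into $[0,2\pi]$. Up to the positive factor $1/\sqrt{b}$ it therefore suffices to establish that
\begin{equation*}
I_{m,n}:=\int_{0}^{2\pi}\sin^{m}u\,\cos^{n}u\,du = 0
\end{equation*}
whenever $m$ or $n$ is odd.

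First I would handle the case where $n$ is odd. Writing $n=2\ell+1$, I would use $\cos^{n}u = \cos u\cdot(1-\sin^{2}u)^{\ell}$ so that $\cos^{n}u\,du = (1-\sin^{2}u)^{\ell}\,d(\sin u)$. Then
\begin{equation*}
I_{m,n} = \int_{0}^{2\pi}\sin^{m}u\,(1-\sin^{2}u)^{\ell}\,d(\sin u),
\end{equation*}
and substituting $s=\sin u$ turns this into $\int_{s(0)}^{s(2\pi)} s^{m}(1-s^{2})^{\ell}\,ds$; since $\sin 0 = \sin 2\pi = 0$, the endpoints coincide and the integral vanishes. (Equivalently, the integrand is an exact derivative of the $2\pi$-periodic function $G(u) := \int_{0}^{\sin u} s^{m}(1-s^{2})^{\ell}\,ds$.)

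Symmetrically, if $m$ is odd, write $m = 2k+1$ and use $\sin^{m}u = \sin u\,(1-\cos^{2}u)^{k}$, so that $\sin^{m}u\,du = -(1-\cos^{2}u)^{k}\,d(\cos u)$. Then the substitution $c=\cos u$ gives
\begin{equation*}
I_{m,n} = -\int_{\cos 0}^{\cos 2\pi}(1-c^{2})^{k}c^{n}\,dc,
\end{equation*}
which vanishes because $\cos 0 = \cos 2\pi = 1$. There is no real obstacle here; the only thing to check carefully is that both substitutions produce antiderivatives that are $2\pi$-periodic in $u$, which follows from expressing them as polynomials in $\sin u$ respectively $\cos u$. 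I would conclude by restoring the factor $1/\sqrt{b}$ and remarking that the original integral equals $I_{m,n}/\sqrt{b} = 0$ whenever $m$ or $n$ is odd.
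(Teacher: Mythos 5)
Your proof is correct and follows essentially the same route as the paper: in both, the odd power is rewritten via $\cos^{2\ell+1}=(1-\sin^2)^{\ell}\cos$ (resp.\ $\sin^{2k+1}=(1-\cos^2)^{k}\sin$) and the substitution $s=\sin(\cdot)$ (resp.\ $c=\cos(\cdot)$) yields an integral whose endpoints coincide, hence zero. Your extra remark that the resulting antiderivative is a $2\pi$-periodic polynomial in $\sin u$ or $\cos u$ is a welcome precision the paper glosses over, but the argument is the same.
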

		
		\begin{proof}
			When $n=2k+1$, we have 
        \begin{align*}
		&\int_{0}^{\frac{2\pi}{\sqrt{b}}}\sin^m\left(\sqrt{b}\theta\right)\cos^{2k+1}\left(\sqrt{b}\theta\right)\,d\theta=\int_{0}^{\frac{2\pi}{\sqrt{b}}}\sin^m\left(\sqrt{b}\theta\right)\left(1-\sin^2\left(\sqrt{b}\theta\right)\right)^{k}\cos\left(\sqrt{b}\theta\right)\,d\theta.
        \end{align*}
			Considering the substitution $u=\sin\left(\sqrt{b}\theta\right)$, we get
			$$\dfrac{1}{\sqrt{b}}\int_0^0u^m(1-u^2)^k\,du=0.$$
			Now, when $m=2k+1$, we have 
        \begin{align*}
		&\int_{0}^{\frac{2\pi}{\sqrt{b}}}\sin^{2k+1}\left(\sqrt{b}\theta\right)\cos^{n}\left(\sqrt{b}\theta\right)\,d\theta=\int_{0}^{\frac{2\pi}{\sqrt{b}}}\sin\left(\sqrt{b}\theta\right)\left(1-\cos^2\left(\sqrt{b}\theta\right)\right)^k\cos\left(\sqrt{b}\theta\right)\,d\theta.
        \end{align*}
			Considering the substitution $v=\cos\left(\sqrt{b}\theta\right)$, we obtain
			$$-\dfrac{1}{\sqrt{b}}\int_0^0v^n(1-v^2)^k\,du=0.$$
		\end{proof}

		\begin{remark}\label{gamma}
			Remember that if the real part of the complex number $z$ is strictly positive, we define the gamma function
			$$ \Gamma(z)=\int _{0}^{\infty }x^{z-1}e^{-x}\,dx.$$
			Moreover, for non-negative integer values of $n$, we have
			$$\Gamma(n)=(n-1)! \quad \mbox{and} \quad \Gamma\left( \frac{1}{2}+n\right)=\frac{(2n-1)!!}{2^n}\sqrt{\pi}.$$
			For more details see \cite{gamma,artingamma}. 
		\end{remark}
		
		\begin{remark}\label{beta}
			Remember that for $n,m \in \mathbb{R}$ we define the beta function
			$$B(n,m)=\int_0^1x^{n-1}(1-x)^{m-1}\;dx,$$
			and this function is related with the gamma function by the equality 
			$$B(n,m)=\frac{\Gamma(n)\Gamma(m)}{\Gamma(n+m)}.$$
			Moreover, considering the substitution $x=\sin^2 (\theta)$, we get
        \begin{align*}
		B(n,m)&=2\int_0^{\frac{\pi}{2}}(\sin^2(\theta))^{n-1}(1-\sin^2(\theta))^{m-1}\sin(\theta)\cos(\theta)\;d\theta\\
        &= 2\int_0^{\frac{\pi}{2}}\sin^{2n-1}(\theta)\cos^{2m-1}(\theta)\;d\theta.
        \end{align*}
			For more details see \cite{gamma,artingamma}. 
		\end{remark}
		
		\begin{lemma}\label{betaint}
			Assume $b>0$. When $n$ is even, 
			$$\int_{0}^{\frac{2\pi}{\sqrt{b}}}\sin^n\left(\sqrt{b}\theta\right)\cos^m\left(\sqrt{b}\theta\right)\,d\theta = \dfrac{2\pi(m-1)!!(n-1)!!}{\sqrt{b}(m+n)!!}.$$
		\end{lemma}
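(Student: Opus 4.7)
The plan is to reduce the integral to a standard Beta-function integral over $[0,\pi/2]$ and then invoke the Gamma-function identities recorded in Remarks \ref{gamma} and \ref{beta}. Throughout I tacitly assume that $m$ is also even, since otherwise Lemma \ref{intzero} gives $0$ (and the formula as stated is intended for the case complementary to that lemma).

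First I would perform the substitution $u=\sqrt{b}\,\theta$, which turns the integral into
$$\int_0^{2\pi/\sqrt{b}}\sin^n(\sqrt{b}\theta)\cos^m(\sqrt{b}\theta)\,d\theta=\frac{1}{\sqrt{b}}\int_0^{2\pi}\sin^n u\,\cos^m u\,du.$$
Because both exponents are even, the integrand is invariant under $u\mapsto u+\pi$ and under $u\mapsto\pi-u$, so by the usual symmetry arguments
$$\int_0^{2\pi}\sin^n u\,\cos^m u\,du=4\int_0^{\pi/2}\sin^n u\,\cos^m u\,du.$$

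Next I would apply Remark \ref{beta}: choosing the exponents so that $2N-1=n$ and $2M-1=m$, i.e.\ $N=(n+1)/2$, $M=(m+1)/2$, gives
$$\int_0^{\pi/2}\sin^n u\,\cos^m u\,du=\tfrac{1}{2}B\!\left(\tfrac{n+1}{2},\tfrac{m+1}{2}\right)=\frac{1}{2}\cdot\frac{\Gamma\!\left(\frac{n+1}{2}\right)\Gamma\!\left(\frac{m+1}{2}\right)}{\Gamma\!\left(\frac{n+m}{2}+1\right)}.$$
Writing $n=2p$ and $m=2q$, the arguments of the Gammas in the numerator are half-integers and the one in the denominator is an integer, so Remark \ref{gamma} applies directly:
$$\Gamma\!\left(p+\tfrac{1}{2}\right)=\frac{(2p-1)!!}{2^p}\sqrt{\pi},\qquad \Gamma(p+q+1)=(p+q)!.$$

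The final step is a cosmetic identification. Substituting these evaluations yields
$$\int_0^{\pi/2}\sin^n u\,\cos^m u\,du=\frac{\pi}{2}\cdot\frac{(n-1)!!\,(m-1)!!}{2^{p+q}(p+q)!},$$
after which I would use the identity $2^{p+q}(p+q)!=(2(p+q))!!=(m+n)!!$ (valid because $m+n$ is even) to rewrite the denominator. Multiplying by the factor of $4$ picked up from the symmetry reduction and by the $1/\sqrt{b}$ coming from the initial substitution produces exactly
$$\int_0^{2\pi/\sqrt{b}}\sin^n(\sqrt{b}\theta)\cos^m(\sqrt{b}\theta)\,d\theta=\frac{2\pi\,(m-1)!!\,(n-1)!!}{\sqrt{b}\,(m+n)!!}.$$

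The only real bookkeeping hurdle is the last identification step: translating the half-integer Gamma evaluations back into the double factorials appearing in the statement, and in particular recognizing $2^{p+q}(p+q)!$ as $(m+n)!!$. Everything else is a direct application of the two remarks together with the parity symmetry of $\sin^n\cos^m$ for even $n,m$.
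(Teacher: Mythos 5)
Your proposal is correct and follows essentially the same route as the paper: substitute $u=\sqrt{b}\,\theta$, reduce to $4\int_0^{\pi/2}\sin^n u\cos^m u\,du$ by symmetry, and evaluate via the Beta--Gamma identities of Remarks \ref{gamma} and \ref{beta}. If anything, your symmetry justification (invariance under $u\mapsto u+\pi$ and $u\mapsto\pi-u$ for even exponents) is cleaner than the paper's claim of $\tfrac{\pi}{2}$-periodicity of $\sin^n u\cos^m u$, and your explicit remark that $m$ must also be even is a point the paper leaves implicit.
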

		
		\begin{proof}
			Considering the substitution $u= \sqrt{b}\theta$, we get
			$$\int_{0}^{\frac{2\pi}{\sqrt{b}}}\sin^n\left(\sqrt{b}\theta\right)\cos^m\left(\sqrt{b}\theta\right)\,d\theta = \frac{1}{\sqrt{b}}\int_{0}^{2\pi}\sin^n(u)\cos^m(u)\,du.$$
			We note that 
			$$\sin^n\left(u+\frac{\pi}{2}\right)\cos^m\left(u+\frac{\pi}{2}\right)= \sin^n(u)\cos^m(u),$$
			because
			$$\sin\left(u+\frac{\pi}{2}\right)=\cos(u)\cos\left(\frac{\pi}{2}\right)-\sin(u)\sin\left(\frac{\pi}{2}\right)$$
			and
			$$\cos\left(u+\frac{\pi}{2}\right)=\sin(u)\cos\left(\frac{\pi}{2}\right)+\sin\left(\frac{\pi}{2}\right)\cos(u),$$
			that is, the term $\sin^n(u)\cos^m(u)$ of the integral is $\frac{\pi}{2}$-periodic.
			Thus, dividing the interval of integration into four subintervals of size $\frac{\pi}{2}$, we have that
			$$ \frac{1}{\sqrt{b}}\int_{0}^{2\pi}\sin^n(u)\cos^m(u)\,du =  \frac{4}{\sqrt{b}}\int_{0}^{\frac{\pi}{2}}\sin^n(u)\cos^m(u)\,du.$$

			By Remark \ref{beta}, we have
        \begin{align*}
		\int_{0}^{\frac{2\pi}{\sqrt{b}}}\sin^n\left(\sqrt{b}\theta\right)\cos^m\left(\sqrt{b}\theta\right)\,d\theta&=\frac{2}{\sqrt{b}}B\left(\frac{n+1}{2},\frac{m+1}{2}\right)=\frac{2}{\sqrt{b}}\frac{\Gamma\left(\frac{n+1}{2}\right)\Gamma\left(\frac{m+1}{2}\right)}{\Gamma\left(\frac{n+1}{2}+\frac{m+1}{2}\right)}
        \end{align*}
			and, by Remark \ref{gamma}, we get 
			$$\Gamma\left(\frac{n+1}{2}\right)\Gamma\left(\frac{m+1}{2}\right)=\frac{(n-1)!! (m-1)!!}{2^{n+m}}\pi, $$
			$$\Gamma\left(\frac{n+1}{2}+\frac{m+1}{2}\right)=\left(\frac{m+n}{2}\right)!=\frac{(m+n)!!}{2^{m+n}}.$$
			
			Therefore, $$\int_{0}^{\frac{2\pi}{\sqrt{b}}}\sin^n\left(\sqrt{b}\theta\right)\cos^m\left(\sqrt{b}\theta\right)\,d\theta=\frac{2\pi}{\sqrt{b}}\dfrac{(m-1)!!(n-1)!!}{(m+n)!!}.$$
		\end{proof}

		With the aid of previous lemmas, we are able to finish the proof of Theorem \ref{teo2}. 
		
		\begin{proof}[Proof of Theorem \ref{teo2} for $\varepsilon \neq 0$]
			Initially, we are going to calculate 
        \begin{align*}    
		\tilde{\mathcal{F}}(r)=&\int_0^{\frac{2\pi}{\sqrt{b}}} \left(\sqrt{b} \cos \left(\sqrt{b} \theta \right)- a \sin \left(\sqrt{b} \theta \right)\right)\\
        &F \left(\sqrt{b} r \sin \left(\sqrt{b} \theta \right),-b r \cos \left(\sqrt{b} \theta \right),r \cos \left(\sqrt{b} \theta \right)\right) d\theta.
        \end{align*}
			
			We know that $F(z,\dot{z},\ddot{z})$ is a polynomial of degree $n$, then we can write
			$$F(z,x,y)=\sum_{i+j+k=0}^{n}a_{ijk}x^iy^jz^k.$$
			Hence,
			\begin{align*}
				&\quad\displaystyle F\left(\sqrt{b} r \sin \left(\sqrt{b}\theta \right),-b r \cos \left(\sqrt{b}\theta \right),r \cos \left(\sqrt{b}\theta \right)\right)\\
				&=\displaystyle \sum_{i+j+k=0}^{n}a_{ijk}\left(\sqrt{b}r \sin \left(\sqrt{b}\theta \right)\right)^i\left(-b r \cos \left(\sqrt{b}\theta \right)\right)^j\left(r \cos \left(\sqrt{b}\theta \right)\right)^k\\
				&=\displaystyle \sum_{i+j+k=0}^{n}a_{ijk}(-1)^j b^{\frac{i}{2}+j} r^{i+j+k} \sin^i\left(\sqrt{b}\theta\right)\cos^{j+k}\left(\sqrt{b}\theta\right).
			\end{align*}
			In this way,
			{\allowdisplaybreaks
				\begin{flalign*}
					\tilde{\mathcal{F}}(r) &= \sqrt{b}\int_0^{\frac{2\pi}{\sqrt{b}}} \!\!\!\cos \left(\sqrt{b}\theta \right)\!  F\left(\sqrt{b} r \sin \left(\sqrt{b} \theta \right),-b r \cos \left(\sqrt{b} \theta \right),r \cos \left(\sqrt{b} \theta \right)\right) d\theta\\[0.2cm]  
					&\quad -a \int_0^{\frac{2\pi}{\sqrt{b}} } \!\!\!\sin \left(\sqrt{b}\theta \right)\!  F\left(\sqrt{b} r \sin \left(\sqrt{b} \theta \right),-b r \cos \left(\sqrt{b} \theta \right),r \cos \left(\sqrt{b} \theta \right)\right) d\theta\\[0.2cm]  
					&= \sum_{i+j+k=0}^{n}\!\!\!a_{ijk}(-1)^jb^{\frac{i}{2}+j}r^{i+j+k} \left[ \sqrt{b}\int_0^{\frac{2\pi}{\sqrt{b}} }\sin^i\left(\sqrt{b}\theta\right)\cos^{j+k+1}\left(\sqrt{b}\theta\right)\, d\theta \right.\\[0.2cm]  
					&\quad \left.-a \int_0^{\frac{2\pi}{\sqrt{b}}}\sin^{i+1}\left(\sqrt{b}\theta\right)\cos^{j+k}\left(\sqrt{b}\theta\right) \, d\theta\right].   
				\end{flalign*}
			}
			
			Suppose that $i+j+k$ is even. With this, we have that $i+j+k+1$ is odd and, furthermore,
			\begin{itemize}
				\item if $i$ is even we get that $i+1$ is odd, $j+k$ is even and $j+k+1$ is odd;
				\item if $i$ is odd we get that $i+1$ is even, $j+k$ is odd and $j+k+1$ is even.
			\end{itemize}
			Hence, in this case, from Lemma  \ref{intzero},
			$$\int_0^{\frac{2\pi}{\sqrt{b}} }\sin^i\left(\sqrt{b}\theta\right)\cos^{j+k+1}\left(\sqrt{b}\theta\right)=0$$
            and
            $$ \int_0^{\frac{2\pi}{\sqrt{b}} }\sin^{i+1}\left(\sqrt{b}\theta\right)\cos^{j+k}\left(\sqrt{b}\theta\right) \, d\theta=0.$$
			Thus, we have
			\begin{align*}
				\tilde{\mathcal{F}}(r)=&\sum_{\substack{i+j+k=0\\ i+j+k=2m+1}}^{n}a_{ijk}(-1)^jb^{\frac{i}{2}+j}r^{i+j+k}\left[\sqrt{b} \int_0^{\frac{2\pi}{\sqrt{b}}}\sin^i\left(\sqrt{b}\theta\right)\cos^{j+k+1}\left(\sqrt{b}\theta\right)\, d\theta\right. \\[0.2cm] 
				&\left. -a \int_0^{\frac{2\pi}{\sqrt{b}} }\sin^{i+1}\left(\sqrt{b}\theta\right)\cos^{j+k}\left(\sqrt{b}\theta\right) \, d\theta\right].
			\end{align*}

			Now, consider that $i+j+k$ is odd. Then, we have that $i+j+k+1$ is even and, furthermore,
			\begin{itemize}
				\item if $i$ is even we get that $i+1$ is odd, $j+k$ is odd and $j+k+1$ is even. Hence, by Lemma \ref{intzero} and Lemma  \ref{betaint}, we have 
				$$\int_0^{\frac{2\pi}{\sqrt{b}} }\sin^{i+1}\left(\sqrt{b}\theta\right)\cos^{j+k}\left(\sqrt{b}\theta\right) \, d\theta=0$$
				and
				$$\int_0^{\frac{2\pi}{\sqrt{b}} }\sin^i\left(\sqrt{b}\theta\right)\cos^{j+k+1}\left(\sqrt{b}\theta\right)\, d\theta = \frac{2\pi}{\sqrt{b}}\dfrac{(i-1)!!(j+k)!!}{(i+j+k+1)!!}\neq 0, $$
				respectively;
				\item if $i$ is odd we get that $i+1$ is even, $j+k$ is even and $j+k+1$ is odd. Hence, by Lemma \ref{intzero} and Lemma  \ref{betaint}, we have 
				$$\int_0^{\frac{2\pi}{\sqrt{b}}}\sin^i\left(\sqrt{b}\theta\right)\cos^{j+k+1}\left(\sqrt{b}\theta\right)\, d\theta$$
				and
				$$ \int_0^{\frac{2\pi}{\sqrt{b}} }\sin^{i+1}\left(\sqrt{b}\theta\right)\cos^{j+k}\left(\sqrt{b}\theta\right) \, d\theta = \frac{2\pi}{\sqrt{b}}\dfrac{i!!(j+k-1)!!}{(i+j+k+1)!!} \neq 0,$$
				respectively.
			\end{itemize}
			Thus, with these information, we get 
			\begin{align*}
				\tilde{\mathcal{F}}(r)&=\!\!\!\!\!\sum_{\substack{i+j+k=0\\ i+j+k=2m+1\\i=2p}}^{n}\!\!\!\!\!a_{ijk}(-1)^jb^{\frac{i}{2}+j+\frac{1}{2}}r^{i+j+k}  \int_0^{\frac{2\pi}{\sqrt{b}} }\sin^i\left(\sqrt{b}\theta\right)\cos^{j+k+1}\left(\sqrt{b}\theta\right)\, d\theta\\[0.2cm]
				&-a\left(\!\!\sum_{\substack{i+j+k=0\\ i+j+k=2m+1\\i=2p+1}}^{n}\!\!\!\!\!a_{ijk}(-1)^jb^{\frac{i}{2}+j}r^{i+j+k}\!\!\int_0^{\frac{2\pi}{\sqrt{b}} }\sin^{i+1}\left(\sqrt{b}\theta\right)\cos^{j+k}\left(\sqrt{b}\theta\right) \, d\theta\!\!\!\right) \\[0.2cm]
				&=\frac{2 \pi}{\sqrt{b}}\sum_{\substack{i+j+k=0\\ i+j+k=2m+1\\i=2p}}^{n}a_{ijk}(-1)^jb^{\frac{i}{2}+j+\frac{1}{2}}r^{i+j+k}  \dfrac{(i-1)!!(j+k)!!}{(i+j+k+1)!!}\\[0.2cm]
				&\quad-\frac{2\pi a}{\sqrt{b}}  \sum_{\substack{i+j+k=0\\ i+j+k=2m+1\\i=2p+1}}^{n}a_{ijk}(-1)^jb^{\frac{i}{2}+j}r^{i+j+k}\dfrac{i!!(j+k-1)!!}{(i+j+k+1)!!}. 
			\end{align*}

			Since $\mathcal{F}(r)= \dfrac{1}{2\pi (a^2+b)}\mathcal{\tilde{F}}(r)$, we obtain
			\begin{align*}
				\mathcal{F}(r)&=\frac{1}{\sqrt{b}(a^2+b)}\sum_{\substack{i+j+k=0\\ i+j+k=2m+1\\i=2p}}^{n}a_{ijk}(-1)^jb^{\frac{i}{2}+j+\frac{1}{2}}r^{i+j+k}  \dfrac{(i-1)!!(j+k)!!}{(i+j+k+1)!!}\\[0.2cm]
				&\quad-\frac{a}{\sqrt{b}(a^2+b)}  \sum_{\substack{i+j+k=0\\ i+j+k=2m+1\\i=2p+1}}^{n}a_{ijk}(-1)^jb^{\frac{i}{2}+j}r^{i+j+k}\dfrac{i!!(j+k-1)!!}{(i+j+k+1)!!}. 
			\end{align*}
			
			To finish, if $n$ is even, by Theorem \ref{descarte}, we get that the maximum number of positive real roots counting multiplicity of $\mathcal{F}(r)$ is $\dfrac{n}{2}-1$. If $n$ is odd, by Theorem \ref{descarte}, we obtain that the maximum number of positive real roots counting multiplicity of $\mathcal{F}(r)$ is $\dfrac{n+1}{2}-1$. 
			
			Therefore, from Lemma \ref{bifurcate} we conclude that if $n$ is odd, $\frac{n-1}{2}$ is the maximum number of limit cycles of the system (\ref{sisper}) that can bifurcate from the periodic orbits of the plane $\alpha$ of the system $(\ref{sisper})|_{\varepsilon =0}$, and if $n$ is even, $\frac{n-2}{2}$ is the maximum number of limit cycles of the system (\ref{sisper}) that can bifurcate from the periodic orbits of the plane $\alpha$ of the system $(\ref{sisper})|_{\varepsilon =0}$.
		\end{proof}

\section{Declarations}

\paragraph{\bf Conflict of interest:} The authors declare that they have no conflict of interest.
\paragraph{\bf Availability of data and material:} Not applicable.
\paragraph{\bf Code availability:} Not applicable.

\section{Acknowledgements}

\noindent Mayara Caldas was financed in part by the Coordenação de Aperfeiçoamento de Pessoal de Nível Superior - Brasil (CAPES) - Finance Code 001. Ricardo Martins was partially supported by FAPESP grants 2021/08031-9 and 2018/03338-6, CNPq grants 315925/2021-3 and 434599/2018-2, and Unicamp/Faepex grant 2475/21.

\footnotesize
%\section{References}

% cut -----------
%\noindent References should be listed in alphabetical order.
%Author's initials should precede their names. References are
%indicated in the text by arabic numbers enclosed in square
%brackets. All the references should be cited in the paper.
%\medskip
% cut -----------

\bibliographystyle{plain}
\bibliography{bibli}

\end{document}